\title{Quantization of Hamiltonian coactions via twist}
\author{
  \textbf{Pierre Bieliavsky}\thanks{\texttt{pierre.bieliavsky@uclouvain.be}}\\[0.3cm]
  Faculté des Sciences\\
  Ecole de Mathématique (MATH)\\
  Institut de Recherche en Mathématique et Physique (IRMP)\\
  Chemin du Cyclotron 2 bte L7.01.02 \\
  1348 Louvain-la-Neuve\\
  Belgium\\[0.3cm]
  \textbf{Chiara Esposito}\thanks{\texttt{chiara.esposito@mathematik.uni-wuerzburg.de}},\\
    Institut für Mathematik \\
  Universität Würzburg \\
  Campus Hubland Nord \\
  Emil-Fischer-Straße 31 \\
  97074 Würzburg \\
  Germany \\[0.3cm]
  \textbf{Ryszard Nest}\thanks{\texttt{rnest@math.ku.dk}},\\
  Department of Mathematical Sciences \\
  Universitetsparken 5 \\
  DK-2100 Copenhagen Ø \\ 
  Denmark
   \\[0.3cm]
}
\newcommand{\refitem}[1] {\textit{\ref{#1}.)}}
\numberwithin{equation}{section}
\let\originalleft\left
\let\originalright\right
\renewcommand{\left}{\mathopen{}\mathclose\bgroup\originalleft}
\renewcommand{\right}{\aftergroup\egroup\originalright}
\newtheorem{lemma}{Lemma}[section]
\newtheorem{proposition}[lemma]{Proposition}
\newtheorem{theorem}[lemma]{Theorem}
\newtheorem{definition}[lemma]{Definition}
\newtheorem{example}[lemma]{Example}
\newtheorem{remark}[lemma]{Remark}
\theoremstyle{nonumberplain}
\newtheorem{proof}{Proof}
 \newenvironment{remarklist}{\begin{compactenum}[\itshape i.)]}{\end{compactenum}}
 \newenvironment{lemmalist}{\begin{compactenum}[\itshape i.)]}{\end{compactenum}}
 \newenvironment{theoremlist}{\begin{compactenum}[\itshape i.)]}{\end{compactenum}}
 \newenvironment{definitionlist}{\begin{compactenum}[\itshape i.)]}{\end{compactenum}}
\newcommand{\bem}[1]        {\,{}^\flat{\hspace*{-0.05cm}#1}}
 \newcommand\ot[2]{\overset{#1}{#2}}
\newcommand{\lforms}[1]    {\Omega^1_{\mathrm{loc}}(#1)}
\newcommand{\Fun}[1][k]      {\mathscr{C}^{#1}}
\newcommand{\Cinfty}         {\Fun[\infty]}
\newcommand{\Lie}   {\mathscr{L}}
\newcommand{\lie}[1]          {\mathfrak{#1}}
\newcommand{\tensor}[1][{}]           {\mathbin{\otimes_{\scriptscriptstyle{#1}}}}
\DeclarePairedDelimiter{\Schouten}{\llbracket}{\rrbracket}
\newcommand{\Sec}[1][k]      {\Gamma^{#1}}
\newcommand{\Secinfty}       {\Sec[\infty]}
\newcommand{\argument}       {\,\cdot\,}
\newcommand{\D}              {\mathop{}\!\mathrm{d}}
\newcommand{\group}[1]        {\mathrm{#1}}
\newcommand{\Der}            {\operatorname{\mathsf{Der}}}
\newcommand{\at}[1]          {\big|_{#1}}
\DeclareMathOperator{\id}    {\mathsf{id}}
\newcommand{\pr}             {\mathrm{pr}}
\DeclareMathOperator{\ad}     {\mathrm{ad}}
\begin{document}

\maketitle

\begin{abstract}
  In this paper we introduce a notion of quantum Hamiltonian
  (co)action of Hopf algebras endowed with Drinfel'd twist structure (resp., 2-cocycles). First, we define
  a classical Hamiltonian action in the setting of Poisson Lie groups compatible with the 2-cocycle
  structure and we discuss a concrete example. This allows us to construct, out of the classical momentum
  map, a quantum momentum map in the setting of Hopf coactions and to quantize
  it by using Drinfel'd approach.
\end{abstract}

\newpage

\tableofcontents

\section*{Introduction}

Deformation quantization has been introduced by Bayen,
Flato, Fronsdal, Lichnerowicz and Sternheimer in
\cite{bayen.et.al:1978a} and since then many developments occurred.  
A (formal) star product on a Poisson manifold $M$ is defined as a
formal associative deformation of the algebra of smooth
functions $\Cinfty (M)$ on $M$. Existence and classification
of star products on Poisson manifolds has been proved via formality theory
in \cite{kontsevich}.
In the same spirit, Drinfel'd introduced the notion
of quantum groups as deformations of Hopf algebras, 
whose semiclassical limit are the so-called Poisson Lie groups
which are Lie groups with multiplicative Poisson structures
(see
e.g. the textbooks \cite{chari.pressley:1994a,
Majid} for a detailed discussion).

In this paper we focus on 
particular classes of star products which are induced by a (formal) Drinfel'd twist by means
of universal deformation formulas (UDF) as discussed e.g. in \cite{drinfeld:1983, drinfeld:1988a}. 
Roughly speaking, a Drinfel'd
twist of an enveloping algebra $\mathcal{U}(\lie g)$ is an element $\mathcal{F} \in \mathcal{U}(\lie{g}) \tensor
\mathcal{U}(\lie{g})$ compatible with the Hopf algebra structure on $\mathcal{U}(\lie{g})$.
Given a Hopf algebra action of $\mathcal{U}(\lie{g})$ on an associative algebra
one can deform the $\mathcal{U}(\lie{g})$-module algebra and the deformed
product turns out to
be a star product. It is important to stress that the $\mathcal{U}(\lie{g})$-module algebra
is automatically endowed with a Poisson bracket defined as the semiclassical
limit of such star product.
In recent works the UDF has been further studied, e.g. \cite{BBM, jonas, thomas1, thomas2, gerstenhaber}.
Also, a twist defines a 2-cocycle on the Hopf algebra $\Cinfty(G)$ and it can be seen 
that the star products induced via UDF coincide with star products
induced by the 2-cocycle on $\Cinfty(G)$-comodule algebras. Finally, a non-formal 
version of Drinfel'd twist and its corresponding UDF has been discussed in \cite{Bieliavsky2015}.

Given a Lie algebra action $\varphi\colon \lie g \to \Secinfty (TM)$ on a smooth manifold $M$, we can always
obtain a Hopf algebra action $\mathcal{U}({\lie{g}}) \times \Cinfty (M) \to \Cinfty (M)$. Thus,
Drinfel'd approach can be interpreted by saying that symmetries encoded by Lie algebra
actions induce quantization. Also, this approach provides a notion of
quantized action. In this paper we prove that 
this approach is compatible with Hamiltonian actions. 
In other words, given a classical Hamiltonian action our goal is to quantize it
by using Drinfel'd approach and get a notion of quantum momentum map. The problem of quantizing 
the momentum map has been the main topic of
many works, e.g. \cite{fedosov} and \cite{lu3}. In general, the interest for the quantization
of the momentum map is motivated by the fact that conserved quantities described via the momentum map lead
to phase space reduction which constructs from the high-dimensional original phase space one of a
smaller dimension. Thus, it is highly desirable to
find an analogue in the quantum setting.
A study of the compatibility of the notion of quantum action 
provided by Drinfeld and the notion of Hamiltonian action was so far absent.
In this paper we prove that the two notions are actually compatible and we construct 
a quantum momentum map via twist.

The content of this work is as follows. 

In Section~\ref{sec:preliminaries}
we discuss the well-known notions of Drinfel'd twist and its corresponding
2-cocycle and the construction of the universal deformation
formula. Twist and 2-cocycle induce a quantum group 
structure which is briefly recalled.

Section~\ref{sec:hamiltonian} contains a definition of
Hamiltonian actions in the setting of Poisson Lie groups which
generalizes the one contained in \cite{Lu1990, Lu1991}. 
More precisely, we need to introduce a notion of classical Hamiltonian action
which is compatible with twist, which is necessary in order to quantize Hamiltonian
actions by using Drinfel'd approach. 

It is known that the semiclassical limit of a twist
gives rise to an element $r \in \lie g\wedge \lie g$,
called $r$-matrix, satisfying the condition $\Schouten{r, r}= 0$ (for a detailed treatment
of the relation between $r$-matrices and twist see \cite{jonas}).  It can
be proved that $r$-matrices always induces a Lie bialgebra structure
on $\lie g$. Thus, the corresponding Lie group $G$ automatically becomes a Poisson Lie group,
since the Poisson tensor obtained by integrating the Lie bialgebra structure on $\lie g$
is multiplicative. The concept of momentum map for Poisson Lie groups acting
on Poisson manifolds has been first introduced by Lu in \cite{Lu1990, Lu1991}, in the case
in which the Poisson structures of $G$, its dual $G^*$ and $M$ are fixed. In contrast to the ordinary momentum map 
it takes values in $G^*$ and the equivariance is defined in relation to the so-called dressing action of $G$
on $G^*$.
Here we introduce a slight generalization and then focus on the case in which,
in the same spirit as Drinfel'd, 
the Poisson structure on $G^*$ is induced by $r$ via the dressing action and on $M$
via the action $\varphi$. 

In
Section~\ref{sec:hopfhamiltonian} we construct a momentum map
in the setting of Hopf algebra actions and coactions and study its 
quantization. More precisely, given a
classical Hamiltonian action $\varphi : \lie g \to \Secinfty(TM)$ 
with momentum map $J: M \to G^*$
we construct a corresponding 
Hopf algebra action and we prove that $J^*$ defines a momentum map for this
action. This allows us to define the notion of Hamiltonian Hopf algebra action.
Motivated by the significance of coactions in the 
theory of quantum groups in the $C^*$-algebraic framework, we give a dual version
of the above result and prove that given $\varphi$ 
the corresponding Hopf algebra coaction $\delta_\Phi : \Cinfty(M)
  \to \Cinfty(M) \tensor \Cinfty(G)$ is also Hamiltonian.
Finally, using the UDF we obtain the quantized algebras $\Cinfty_\hbar(M)$ and we prove that
the quantum group
  coaction $\delta_\Phi : \Cinfty_\hbar(M) \to
  \Cinfty_\hbar(M) \tensor \Cinfty_\hbar(G)$ is again Hamiltonian.

\section{Preliminaries}
\label{sec:preliminaries}

Let $\lie{g}$ be a (finite-dimensional) Lie algebra and consider the
algebra $\mathcal{U}(\lie{g})[[\hbar]]$ of formal power series with
coefficients in the universal enveloping algebra
$\mathcal{U}(\lie{g})$.  It can be endowed with a (topologically free)
Hopf algebra structure, denoted by $(\mathcal{U}(\lie{g})[[\hbar]],
\Delta, \epsilon, S)$. Let us recall the definition of a Drinfel’d twist 
and its semiclassical limit, see \cite{drinfeld:1983, drinfeld:1988a}.
\begin{definition}[Twist]
\label{def:twist}
  An element $\mathcal{F}\in(\mathcal{U}(\lie{g}) \tensor
  \mathcal{U}(\lie{g}))[[\hbar]]$ is said to be a twist on
  $\mathcal{U}(\lie{g})[[\hbar]]$ if the following three conditions
  are satisfied:
  \begin{definitionlist}
  \item
      $\mathcal{F} = 1 \tensor 1 + \sum_{k=1}^\infty \hbar^k \mathcal{F}_k$.
  \item \label{cond:cocycle} $(\mathcal{F}\tensor 1)(\Delta\tensor
    1)(\mathcal{F}) = (1 \tensor
    \mathcal{F})(1\tensor\Delta)(\mathcal{F})$.
  \item
    $(\epsilon\tensor 1)\mathcal{F} = (1\tensor\epsilon)\mathcal{F} = 1$.
  \end{definitionlist}
\end{definition}
We sometimes use the notation $\mathcal{F} = \mathcal{F}^\alpha
\tensor \mathcal{F}_\alpha$.  The semiclassical limit of a twist gives
rise to a well-known structure on the Lie algebra $\lie g$ called
$r$-matrix, as proved in \cite{drinfeld:1988a} or
\cite[Thm.~1.14]{giaquinto.zhang:1998a}. In fact, we have the following
claim.
\begin{proposition}
  Given a twist $\mathcal{F}$ on $\mathcal{U}(\lie{g})[[\hbar]]$, the
  antisymmetric part of its first order is a classical $r$-matrix $r
  \in \lie{g} \wedge \lie{g}$.
\end{proposition}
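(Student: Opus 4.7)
The proof splits naturally into two claims: first, that the antisymmetric part $r := \tfrac{1}{2}(\mathcal{F}_1 - \tau \mathcal{F}_1)$, with $\tau$ the tensor flip, lies in $\lie g \wedge \lie g \subset \mathcal{U}(\lie g) \otimes \mathcal{U}(\lie g)$; and second, that $r$ satisfies the classical Yang--Baxter equation $\Schouten{r,r} = 0$. The plan is to derive both by expanding the twist axioms order by order in $\hbar$.

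For the first claim, I would extract the order-$\hbar$ consequences of Definition~\ref{def:twist}. The counit axiom gives $(\epsilon \otimes \id)\mathcal{F}_1 = (\id \otimes \epsilon)\mathcal{F}_1 = 0$, while the cocycle condition yields the normalized cobar $2$-cocycle identity
\begin{equation*}
(\Delta \otimes \id)\mathcal{F}_1 - (\id \otimes \Delta)\mathcal{F}_1 = 1 \otimes \mathcal{F}_1 - \mathcal{F}_1 \otimes 1.
\end{equation*}
Next I would use cocommutativity of $\mathcal{U}(\lie g)$: every normalized coboundary $\partial a = a \otimes 1 + 1 \otimes a - \Delta(a)$ is manifestly $\tau$-invariant, so $r$ is unchanged if $\mathcal{F}_1$ is shifted by a coboundary. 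Via the PBW / symmetrization isomorphism $\mathcal{U}(\lie g) \cong S(\lie g)$ as coalgebras, decompose $\mathcal{F}_1 = \sum_{n,m \geq 1} \mathcal{F}_1^{(n,m)}$ with $\mathcal{F}_1^{(n,m)} \in S^n(\lie g) \otimes S^m(\lie g)$. A bigraded induction on $n+m$ then shows that each piece with $(n,m) \neq (1,1)$ is cohomologous to zero: for instance, the case $(n,m)=(2,1)$ follows by extracting the cocycle equation in tridegree $(1,1,1)$, which forces the coefficients $T^{abc}$ of $r^{(2,1)}$ to satisfy the cyclic identity $T^{abc} + T^{bca} = 0$, hence $T = 0$. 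The higher bidegrees are handled analogously. What remains is a representative in $\lie g \otimes \lie g$, whose antisymmetric part is the desired $r \in \lie g \wedge \lie g$.

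For the second claim, I would expand the cocycle axiom to order $\hbar^2$, obtaining an identity in $\mathcal{U}(\lie g)^{\otimes 3}$ involving $\mathcal{F}_2$ together with bilinear terms in $\mathcal{F}_1$. Taking the totally alternating part over the three tensor positions annihilates the $\mathcal{F}_2$-contribution (whose coefficient is $\tau$-symmetric by cocommutativity) along with all symmetric-in-$\mathcal{F}_1$ terms. After passing to the PBW symbol and using $r \in \lie g \wedge \lie g$ from the first step, what survives is the cyclic commutator sum
\begin{equation*}
[r_{12}, r_{13}] + [r_{12}, r_{23}] + [r_{13}, r_{23}] = 0,
\end{equation*}
which is precisely $\Schouten{r, r} = 0$ in $\lie g \wedge \lie g \wedge \lie g$.

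The main obstacle is the first claim. A priori $\mathcal{F}_1$ lives in the full tensor product $\mathcal{U}(\lie g) \otimes \mathcal{U}(\lie g)$, and it is only the interplay between cocommutativity, the cocycle relation, and the counit normalization that pins its antisymmetric part to the Lie subspace. The bigraded induction, while elementary, is where the combinatorics of the coproduct on $S(\lie g)$ enters essentially. Once this is in place, the order-$\hbar^2$ derivation of CYBE is a routine, if lengthy, direct expansion.
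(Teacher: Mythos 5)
The paper does not actually prove this proposition: it is stated as a known result and attributed to Drinfel'd and to Giaquinto--Zhang, so there is no internal proof to compare against. Your proposal reconstructs precisely the standard argument from those references -- expand the cocycle and counit axioms order by order in $\hbar$, identify $\mathcal{F}_1$ as a normalized $2$-cocycle in the cobar (co-Hochschild) complex of the cocommutative Hopf algebra $\mathcal{U}(\lie g)$, use the PBW coalgebra isomorphism with $S(\lie g)$ to see that the cohomology in degree two is $\lie g \wedge \lie g$ and that coboundaries are symmetric, and then alternate the order-$\hbar^2$ identity to extract the classical Yang--Baxter equation. This is the right approach and the overall structure is sound.

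Two points deserve tightening. First, in the bigraded induction the components $\mathcal{F}_1^{(n,m)}$ are \emph{not} separately cocycles: the cobar differential on $S(\lie g)\tensor S(\lie g)$ preserves only the total symmetric degree $n+m$ and redistributes it among the three tensor slots, so the statement should be that, after subtracting a coboundary chosen degree by degree, the cocycle is cohomologous to an element of $\lie g \tensor \lie g$; the individual pieces with $n+m>2$ are killed only up to such corrections. Second, your reason for discarding the $\mathcal{F}_2$-contribution at order $\hbar^2$ (``its coefficient is $\tau$-symmetric by cocommutativity'') is not the correct mechanism: the terms $\mathcal{F}_2\tensor 1 - 1\tensor\mathcal{F}_2 + (\Delta\tensor\id)\mathcal{F}_2 - (\id\tensor\Delta)\mathcal{F}_2$ form a cobar coboundary, and what annihilates them is the composite of taking the degree-$(1,1,1)$ PBW symbol with total alternation into $\lie g\wedge\lie g\wedge\lie g$ -- i.e., the same cohomological fact (coboundaries have vanishing harmonic part) already used in the first step, not pairwise $\tau$-symmetry. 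With these two repairs the argument is complete and agrees with the cited sources.
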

Given a twist we can obtain a deformed Hopf algebra structure on
$\mathcal{U}(\lie{g})[[\hbar]]$.
\begin{proposition}
  \label{prop:defcopr}
  Let $\mathcal{F}$ be a twist on
  $\mathcal{U}(\lie{g})[[\hbar]]$. Then the algebra
  $\mathcal{U}(\lie{g})[[\hbar]]$ endowed with coproduct given by
  \begin{equation}
    \label{eq:defcopr}
    \Delta_\mathcal{F}
    :=
    \mathcal{F} \Delta \mathcal{F}^{-1}, 
  \end{equation}
  undeformed counit and antipode $S_\mathcal{F} := u_\mathcal{F} S(X)
  u_\mathcal{F}^{-1}$, where $u_\mathcal{F}:= \mathcal{F}^\alpha
  S(\mathcal{F}_\alpha)$ is again a Hopf algebra denoted by
  $\mathcal{U}_\mathcal{F} (\lie g)$.
\end{proposition}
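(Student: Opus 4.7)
The plan is to verify that $(\mathcal{U}(\lie g)[[\hbar]],\cdot,\Delta_\mathcal{F},\epsilon,S_\mathcal{F})$ satisfies the Hopf algebra axioms: multiplicativity and coassociativity of $\Delta_\mathcal{F}$, the counit property of $\epsilon$, and the two convolution identities for $S_\mathcal{F}$. Each axiom will trace back to a specific ingredient of Definition~\ref{def:twist}, and the element $u_\mathcal{F}$ is introduced precisely to make the antipode axiom hold.

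Condition (i) guarantees $\mathcal{F} = 1 \tensor 1 + O(\hbar)$ is invertible in the $\hbar$-adic topology, so $\Delta_\mathcal{F} = \operatorname{Ad}_{\mathcal{F}} \circ \Delta$ is a composition of algebra homomorphisms and is therefore multiplicative. For coassociativity I would expand $(\Delta_\mathcal{F} \tensor \id)\Delta_\mathcal{F}(X)$ and $(\id \tensor \Delta_\mathcal{F})\Delta_\mathcal{F}(X)$ into products of the form $A \cdot (\Delta\tensor\id)\Delta(X)\cdot A'$ and $B \cdot (\id\tensor\Delta)\Delta(X)\cdot B'$ respectively, then invoke coassociativity of the undeformed $\Delta$ together with the cocycle identity (ii) applied to $\mathcal{F}$ (and, after inversion, to $\mathcal{F}^{-1}$) to equate the two. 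For the counit, applying $\epsilon \tensor \id$ to $\mathcal{F}\mathcal{F}^{-1} = 1\tensor 1$ together with (iii) yields $(\epsilon\tensor\id)\mathcal{F}^{-1} = 1$; multiplicativity of $\epsilon\tensor\id$ then collapses $(\epsilon\tensor\id)\Delta_\mathcal{F}(X)$ to $(\epsilon\tensor\id)\Delta(X) = X$, and the other side is symmetric.

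The main obstacle is the antipode axiom. First I would verify that $u_\mathcal{F} = 1 + O(\hbar)$ is invertible in $\mathcal{U}(\lie g)[[\hbar]]$, so that $S_\mathcal{F}$ is well-defined, and record that $S_\mathcal{F}$ inherits from $S$ the property of being an algebra anti-homomorphism. Writing $\mathcal{F} = f^\alpha \tensor f_\alpha$ and $\mathcal{F}^{-1} = \bar f^\beta \tensor \bar f_\beta$, the key computation evaluates
\begin{equation*}
m \circ (S_\mathcal{F} \tensor \id)\circ \Delta_\mathcal{F}(X) = u_\mathcal{F}\, S(\bar f^\beta)\,S(X_{(1)})\,S(f^\alpha)\,u_\mathcal{F}^{-1}\,f_\alpha\, X_{(2)}\,\bar f_\beta.
\end{equation*}
The critical step is to extract from the cocycle condition (ii) an identity expressing $S(f^\alpha)\,u_\mathcal{F}^{-1}\,f_\alpha$ and its $\mathcal{F}^{-1}$ counterpart in closed form, which, combined with the undeformed antipode identity $\sum S(X_{(1)}) X_{(2)} = \epsilon(X)\cdot 1$, collapses the right-hand side to $\epsilon(X)\cdot 1$. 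The mirror convolution identity $m\circ (\id\tensor S_\mathcal{F})\circ \Delta_\mathcal{F} = \eta\circ\epsilon$ is then verified by the parallel computation using the same cocycle condition. The index bookkeeping and the precise appeal to (ii) identifying $u_\mathcal{F}$ as the conjugator turning $S$ into the antipode of the twisted coproduct is the most delicate point, and it is exactly this requirement that forces Drinfel'd's choice of $u_\mathcal{F} = \mathcal{F}^\alpha S(\mathcal{F}_\alpha)$.
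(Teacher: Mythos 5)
The paper itself gives no proof of this proposition; it is quoted as a standard result of Drinfel'd (cf.\ \cite{drinfeld:1983, drinfeld:1988a}), so there is no internal argument to compare against. Your outline is the standard textbook proof (see e.g.\ \cite{Majid}) and is sound in structure: multiplicativity of $\Delta_\mathcal{F}$, the counit axiom, and coassociativity (which, after pushing all twist factors to the outside of $(\Delta\tensor\id)\Delta(X)$, reduces precisely to condition \refitem{cond:cocycle}) are all handled correctly. The one point you should sharpen is in the antipode step: checking that $u_\mathcal{F} = 1 + O(\hbar)$ is $\hbar$-adically invertible is immediate but is not where the content lies. What the computation actually needs is the closed form $u_\mathcal{F}^{-1} = S(\bar{\mathcal{F}}^\beta)\,\bar{\mathcal{F}}_\beta$, where $\mathcal{F}^{-1} = \bar{\mathcal{F}}^\beta \tensor \bar{\mathcal{F}}_\beta$, and it is in proving this identity that the cocycle condition does its real work. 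Once it is in hand, your ``critical identity'' is simply $\sum_\alpha S(\mathcal{F}^\alpha)\, u_\mathcal{F}^{-1}\, \mathcal{F}_\alpha = m\circ(S\tensor\id)(\mathcal{F}^{-1}\mathcal{F}) = S(1)\cdot 1 = 1$, since the sums over the twist indices and over the Sweedler components of $X$ are independent; the convolution identity then collapses to $\epsilon(X)\, u_\mathcal{F}\, u_\mathcal{F}^{-1} = \epsilon(X)\,1$ exactly as you indicate, and the mirror identity is analogous. So the plan is correct; the only genuine labour you have deferred is the proof that $m\circ(S\tensor\id)(\mathcal{F}^{-1})$ inverts $u_\mathcal{F}$.
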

As a consequence, the twist automatically defines a Lie bialgebra
structure.  Given a twist on the universal enveloping algebra, we can
always define a star product on any $\mathcal{U}(\lie{g})$-module
algebra. In particular, let us consider the algebra $\Cinfty(M)$ of
smooth functions on a manifold $M$ with pointwise multiplication $m_M$
and a Hopf algebra action
\begin{equation}
  \label{eq:Action}
  \Phi
  \colon
  \mathcal{U}(\lie{g}) \times \Cinfty(M)
  \longrightarrow
  \Cinfty(M)
  \colon 
  (X, f)
  \mapsto 
  \Phi(X, f) 
\end{equation}
This action can be immediately extended to formal power series, allowing the following 
result.
\begin{lemma}[Universal deformation formula]
\label{lem:startwist}
  The product defined by
  \begin{align}
    \label{eq:StarTwist}
    f \star_\mathcal{F} g
    =
    m_M(\Phi(\mathcal{F}^{-1},(f\tensor g)))
  \end{align}
  for $f, g \in \Cinfty(M)[[\hbar]]$ is an associative star product
  quantizing the Poisson structure induced by the semiclassical limit
  $r$ of $\mathcal{F}$ via the action.
\end{lemma}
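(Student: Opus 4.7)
The plan is to verify three claims in turn: that $\star_\mathcal{F}$ reduces to the pointwise product $m_M$ at order zero in $\hbar$, that it is associative, and that its first-order antisymmetric part is the Poisson bracket on $\Cinfty(M)$ induced by $r$ through the infinitesimal action underlying $\Phi$.

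The first point is immediate. Invertibility of $\mathcal{F}$ in $(\mathcal{U}(\lie g) \tensor \mathcal{U}(\lie g))[[\hbar]]$ follows from the leading term $1 \tensor 1$ by a Neumann series, so $\mathcal{F}^{-1} = 1 \tensor 1 + O(\hbar)$ and hence $f \star_\mathcal{F} g = fg + O(\hbar)$. The counit normalization of Definition~\ref{def:twist} passes to $\mathcal{F}^{-1}$ and guarantees that the constant function $1$ remains a unit for $\star_\mathcal{F}$.

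Associativity is the main algebraic step. Using that $\Phi$ makes $\Cinfty(M)[[\hbar]]$ into a $\mathcal{U}(\lie g)[[\hbar]]$-module algebra, so that $\Phi(X, m_M(f \tensor g)) = m_M((\Phi \tensor \Phi)(\Delta X, f \tensor g))$ for every $X$, one rewrites
\begin{equation*}
  (f \star_\mathcal{F} g) \star_\mathcal{F} h = m_M \circ (m_M \tensor \id) \bigl((\Phi \tensor \Phi \tensor \Phi)\bigl((\Delta \tensor \id)(\mathcal{F}^{-1})\,(\mathcal{F}^{-1} \tensor 1),\, f \tensor g \tensor h\bigr)\bigr),
\end{equation*}
and there is a completely analogous expression with $(\id \tensor \Delta)(\mathcal{F}^{-1})\,(1 \tensor \mathcal{F}^{-1})$ for $f \star_\mathcal{F} (g \star_\mathcal{F} h)$. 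Equality of the two three-fold products then reduces to
\begin{equation*}
  (\Delta \tensor \id)(\mathcal{F}^{-1})\,(\mathcal{F}^{-1} \tensor 1) = (\id \tensor \Delta)(\mathcal{F}^{-1})\,(1 \tensor \mathcal{F}^{-1}),
\end{equation*}
which is precisely what one obtains by inverting both sides of the cocycle identity in Definition~\ref{def:twist}.

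For the semiclassical limit I expand $\mathcal{F}^{-1} = 1 \tensor 1 - \hbar \mathcal{F}_1 + O(\hbar^2)$ and pick out the $\hbar$-coefficient of the commutator $f \star_\mathcal{F} g - g \star_\mathcal{F} f$. Only the antisymmetric part of $\mathcal{F}_1$ survives, and this, by the preceding proposition, is (up to a conventional scalar) the classical $r$-matrix $r$. The bracket one reads off is therefore the one defined on $\Cinfty(M)$ by $r$ through the Lie algebra action coming from $\Phi$, with the Jacobi identity ensured by $\Schouten{r,r}=0$. The only subtle part of the argument is the bookkeeping translating elements of $\mathcal{U}(\lie g)^{\otimes n}[[\hbar]]$ into multilinear operators on $\Cinfty(M)[[\hbar]]$; once the module-algebra identity is invoked to move the coproduct across the multiplication, both associativity and the identification of the semiclassical bracket become purely algebraic consequences of the axioms on $\mathcal{F}$.
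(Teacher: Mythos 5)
The paper does not actually prove this lemma: it is recalled as a known result (Drinfel'd's universal deformation formula, cf.\ the cited references of Drinfel'd and Giaquinto--Zhang), so there is no in-text argument to compare yours against. Your proof is the standard one and is correct: the module-algebra identity $\Phi(X, m_M(f\tensor g)) = m_M((\Phi\tensor\Phi)(\Delta X\,, f\tensor g))$ pushes both iterated products into $\mathcal{U}(\lie{g})^{\tensor 3}[[\hbar]]$, associativity then reduces to the inverted cocycle identity
\begin{equation*}
  (\Delta\tensor\id)(\mathcal{F}^{-1})\,(\mathcal{F}^{-1}\tensor 1)
  =
  (\id\tensor\Delta)(\mathcal{F}^{-1})\,(1\tensor\mathcal{F}^{-1}),
\end{equation*}
and the normalization and the identification of the semiclassical bracket follow from the counit axiom and the expansion $\mathcal{F}^{-1} = 1\tensor 1 - \hbar\mathcal{F}_1 + O(\hbar^2)$. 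Two minor points you could make explicit: to call $\star_\mathcal{F}$ a star product in the usual (bidifferential) sense one should observe that elements of $\mathcal{U}(\lie{g})$ act on $\Cinfty(M)$ by differential operators, so each order of $\star_\mathcal{F}$ is a bidifferential operator; and the Jacobi identity for the first-order antisymmetric part is already forced by associativity of $\star_\mathcal{F}$ at order $\hbar^2$, so invoking $\Schouten{r,r}=0$ is consistent but not logically needed at that step.
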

We denote the deformed algebra by $\Cinfty_\mathcal{F}(M)$.
%
%
Moreover, it is important to remark that the deformed algebra
$\Cinfty_\mathcal{F}(M)$ is now a left $\mathcal{U}_\mathcal{F} (\lie
g)$-module algebra

We can give a dual version of the above discussion by using the
notions of 2-cocycles and coactions.  A more detailed discussion about
2-cocycles and their duality with twist can be found in
\cite{Bieliavsky2016}.  Consider the Hopf algebra $\Cinfty(G)$, where
$G$ is the Lie group corresponding to the finite-dimensional Lie
algebra $\lie g$.  It is known that $\Cinfty(G)$ and
$\mathcal{U}(\lie{g})$ are dually paired Hopf algebras algebras with
pairing denoted by $\langle \argument, \argument \rangle$. Thus, given
a twist $\mathcal{F}$ there corresponds an element $\gamma :
(\Cinfty(G) \tensor \Cinfty(G))[[\hbar]] \to \mathbb{K}$ on
$\Cinfty(G)[[\hbar]]$ defined by
\begin{equation}
  \gamma (f \tensor g)
  :=
  \langle \mathcal{F}^\alpha , f \rangle \langle \mathcal{F}_\alpha , g \rangle,
\end{equation}
for all $f, g \in \Cinfty(G)$. Roughly, from the condition
\refitem{cond:cocycle} mentioned in Definition~\ref{def:twist} it is
easy to see that $\gamma$ satisfies the 2-cocycle condition
\begin{equation}
  \gamma (f_{(1)} \tensor g_{(1)} ) \gamma (f_{(2)} g_{(2)} \tensor h)
  =
  \gamma (g_{(1)} \tensor h_{(1)} ) \gamma (f \tensor g_{(2)} h_{(2)}),
\end{equation}
for any $f,g,h \in \Cinfty(G)$. Here we used the Sweedler
notation. Thus, dualizing the deformed Hopf algebra
$\mathcal{U}_\mathcal{F} (\lie g)$ obtained in
Proposition~\ref{prop:defcopr} we immediately obtain a twisted Hopf
algebra denoted by $\Cinfty_\gamma (G)$ with a new associative product
$m_G^\gamma$ defined by
\begin{equation}
  \label{eq:mgamma}
  \langle \Delta_\mathcal{F} (X), f \tensor g \rangle
  =
  \langle X, m_G^\gamma (f \tensor g) \rangle
\end{equation}
As it will be used in the following we resume this structure in the following
defining.
\begin{definition}[Quantum group]
\label{def:quantumgroup}
  The quantum group corresponding to $G$ is defined to be the Hopf algebra
  $\Cinfty_\gamma (G)$ given by $(\Cinfty(G)[[\hbar]], m_G^\gamma, \Delta, \epsilon, S)$
  where the deformed product is given by \eqref{eq:mgamma} and coproduct and counit are undeformed.
\end{definition}
The deformed Hopf algebras
$\Cinfty_\gamma (G)$ and $\mathcal{U}_\mathcal{F} (\lie g)$ are again
dually paired via the same pairing.
Finally, if $\Cinfty(M)$ is a left $\mathcal{U} (\lie g)$-module
algebra via \eqref{eq:Action}, it is automatically a
right-$\Cinfty(G)$-comodule algebra (the coaction $\delta : \Cinfty(M)
\to \Cinfty(M)\tensor \Cinfty(G)$ can be easily obtained by dualizing
$\Phi$, see \cite[Prop.~1.6.11]{Majid}). In the same spirit of
Lemma~\ref{lem:startwist}, the algebra structure of $\Cinfty(M)$ can
be equivalently deformed by considering a 2-cocycle on $\Cinfty(G)$ and pushing its
deformation on $\Cinfty(M)$ via the coaction $\delta$. 



\section{Hamiltonian actions}
\label{sec:hamiltonian}

In this section we introduce the notion of Hamiltonian action in the
setting of Poisson Lie groups. This notion has been first defined in
\cite{Lu1990,Lu1991} in the case of a Poisson Lie group acting on a
Poisson manifold with both Poisson structures fixed. In our work we
are mainly interested in the case in which the Poisson structure on
the manifold is the one induced by the action. This requires a slight
generalization of the notion of Hamiltonian action.

\subsection{Dressing generators}

In the same spirit of \cite{Lu1990,Lu1991}, the notion of Hamiltonian
action relies on the definition of momentum map, which provides us of
a comparison tool between the dressing orbits and the orbit of the
considered action. For this reason, we first focus on the dressing
action and in particular on the possible descriptions of the
corresponding fundamental vector fields.

Let us consider a Lie bialgebra $\lie g$ with dual and double denoted 
by $\lie g^*$ and $\lie d$, respectively. The Lie groups $G$ and $G^*$ associated to 
$\lie g$ and $\lie g^*$, respectively, turn into Poisson Lie groups. 
Furthermore, the Lie group $D$ corresponding to the double Lie algebra $\lie d$
is called double of the Poisson Lie group $G$.

Consider $g \in G$,
$u \in G^*$ and let $u g \in D$ be their product.  Since $\mathfrak{d}
= \lie{g} \oplus\lie{g}^*$, elements in $D$ close to the unit can be
decomposed in a unique way as a product of an element in $G$ and an
element in $G^*$.  Then, there exist elements $^{u}g \in G$ and $u^g
\in G^*$ such that
\begin{equation}
  \label{eq:DressDec}
  ug 
   = 
  {}^{u}g u^g.
\end{equation}
Hence, the action of $g\in G$ on $u\in G^*$ is given by
\begin{equation}
  (u,g) \mapsto (ug)_G^*
\end{equation}
where $(ug)_G^*$ denotes the $G^*$-factor of $ug\in D$.  This defines
a left action of $G$ on $G^*$, called \emph{dressing action}.  This
action plays an important role in the context of Poisson actions since
its orbits coincides with the symplectic leaves of $G^*$ and its
linearization is the coadjoint action.
Let us denote by $\ell_X$ the corresponding fundamental vector field
for $X \in \lie g$. In the following we introduce the notion of dressing generators,
which are one-forms that give us the fundamental vector fields $\ell_X$ if contracted 
with the Poisson bitensor. As it will be seen in the next sections these forms 
are in general not globally defined, so we use the notation $\lforms{G^*}$ to 
denote local forms on $G^*$.
\begin{definition}[Dressing generator]
  \label{def:DressingGen}
  The map $\alpha: \lie{g} \to \lforms{G^*} : X \mapsto \alpha_X$ is
  said to be dressing generator with respect to the Poisson structure
  $\pi$ on $G^*$ if the fundamental vector field $\ell_X$ of the
  dressing action can be written as
  \begin{equation}
    \label{eq:DressShift}
    \ell_X
    =
    \pi^\sharp (\alpha_X)
  \end{equation}
  and satisfies
  \begin{align}
    \label{eq:AlgMorph}
    \alpha_{[X, Y]} 
    &= 
    [\alpha_X , \alpha_Y]_{\pi_\ell},
    \\
    \label{eq:MC}
    \D \alpha_X 
    &= 
    \alpha \wedge \alpha \circ \delta(X).
  \end{align}
  Here $\delta$ denotes the Lie bialgebra structure on $\lie g$.
\end{definition}

\begin{remark}
  The first example of dressing generators with respect to the standard dual
  Poisson structure $\pi_*$ is given by the left-invariant one-forms
  corresponding to the element $X$, as proved in \cite[Appendix 2,
   page 66]{Kosmann-Schwarzbach2004}. 
  As already mentioned, the dressing generators with respect to a generic Poisson 
  structure on $G^*$ are in general not globally defined (a concrete example is computed
  in the next section). However, the contraction with the Poisson tensor still gives rise
  to a smooth vector field.
\end{remark}
Here we are interested to the case in which $\lie g$ is endowed with
an $r$-matrix and we consider the Poisson structure $\pi_\ell$ induced
by the infinitesimal dressing action $\ell\colon \lie g \to \Secinfty
(T G^*)$ via
\begin{equation}
  \label{eq:pilambda}
  \pi_\ell
  =
  r_{ij}\ell_{X_i} \wedge \ell_{X_j} .
\end{equation}
This is a natural candidate since the contraction of $\pi_\ell$ with
one-forms satisfying \eqref{eq:AlgMorph}-\eqref{eq:MC} gives
 rise automatically to an infinitesimal Poisson action, as proved in
the following Lemma.
\begin{lemma}
  \label{lem:poissonaction}
  Given a map $\alpha\colon \lie{g} \to \lforms{G^*}$ satisfying
  \eqref{eq:AlgMorph}-\eqref{eq:MC} then we have:
  \begin{lemmalist}
  \item The map $\lie g \ni X \mapsto \pi_\ell^\sharp ( \alpha_X) \in
    \Secinfty(TG^*)$ is a Lie algebra morphism
  \item The map $\lie g \ni X \mapsto \pi_\ell^\sharp ( \alpha_X) \in
    \Secinfty(TG^*)$ is an (infinitesimal) Poisson action.
  \end{lemmalist}
\end{lemma}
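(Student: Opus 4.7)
Each of the two claims reduces to one well-known identity of Poisson calculus, into which the hypotheses~\eqref{eq:AlgMorph} and~\eqref{eq:MC} can be substituted directly.

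For claim~(i), the plan is to recognise that the bracket appearing in~\eqref{eq:AlgMorph} is exactly the Koszul--Magri bracket of $1$-forms on the Poisson manifold $(G^*,\pi_\ell)$,
\begin{equation*}
  [\alpha,\beta]_{\pi_\ell}
  =
  \Lie_{\pi_\ell^\sharp\alpha}\beta
  - \Lie_{\pi_\ell^\sharp\beta}\alpha
  - \D\bigl(\pi_\ell(\alpha,\beta)\bigr),
\end{equation*}
and then to invoke the classical result that $\pi_\ell^\sharp\colon (\lforms{G^*},[\argument,\argument]_{\pi_\ell}) \to (\Secinfty(TG^*),[\argument,\argument])$ is a Lie algebra morphism whenever $\Schouten{\pi_\ell,\pi_\ell}=0$. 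Applying $\pi_\ell^\sharp$ to~\eqref{eq:AlgMorph} then yields $\pi_\ell^\sharp(\alpha_{[X,Y]})=[\pi_\ell^\sharp\alpha_X,\pi_\ell^\sharp\alpha_Y]$, settling~(i). The Poisson property of $\pi_\ell$ required here comes from the defining formula~\eqref{eq:pilambda}, the $r$-matrix equation $\Schouten{r,r}=0$, and the fact that $\ell$ is a Lie algebra action.

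For claim~(ii), the plan is to verify the standard compatibility condition
\begin{equation*}
  \Lie_{\pi_\ell^\sharp(\alpha_X)}\pi_\ell
  =
  \bigl(\pi_\ell^\sharp\wedge\pi_\ell^\sharp\bigr)\bigl((\alpha\wedge\alpha)\circ\delta(X)\bigr),
\end{equation*}
which, combined with~(i), characterises an infinitesimal Poisson action of the Lie bialgebra $(\lie g,\delta)$ on $(G^*,\pi_\ell)$. The key bivector identity to use is $\Lie_{\pi^\sharp\alpha}\pi = (\pi^\sharp\wedge\pi^\sharp)(\D\alpha)$, valid for any Poisson $\pi$ and any $1$-form $\alpha$; it is a direct consequence of Schouten--Nijenhuis calculus together with $\Schouten{\pi,\pi}=0$. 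Substituting~\eqref{eq:MC} on its right-hand side produces exactly the desired expression, since $(\pi_\ell^\sharp\wedge\pi_\ell^\sharp)\circ(\alpha\wedge\alpha)=(\pi_\ell^\sharp\circ\alpha)\wedge(\pi_\ell^\sharp\circ\alpha)$ by construction.

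The step demanding the most care is the bivector identity $\Lie_{\pi^\sharp\alpha}\pi = (\pi^\sharp\wedge\pi^\sharp)(\D\alpha)$: its signs are sensitive to the convention $\pi^\sharp(\alpha)(\beta):=\pi(\alpha,\beta)$ and to the graded Leibniz rule for $\Schouten{\argument,\argument}$, and one must keep in mind that $\alpha_X$ is only locally defined so the identity is to be read pointwise. Once this identity is in place, everything else is a mechanical translation of~\eqref{eq:AlgMorph}--\eqref{eq:MC} into geometric statements on vector fields and bivectors by applying $\pi_\ell^\sharp$.
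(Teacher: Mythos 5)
Your proof is correct and follows essentially the same route as the paper: part (i) applies $\pi_\ell^\sharp$ to \eqref{eq:AlgMorph} and invokes the fact that $\pi_\ell^\sharp$ is a Lie algebra morphism for the Koszul--Magri bracket of one-forms, and part (ii) rests on the identity intertwining $\D$ with the Lichnerowicz--Poisson differential, $\D_\pi(\wedge^p\pi^\sharp)(\xi) = (\wedge^{p+1}\pi^\sharp)(\D\xi)$, into which \eqref{eq:MC} is substituted. Your explicit remark that both identities require $\Schouten{\pi_\ell,\pi_\ell}=0$, which follows from \eqref{eq:pilambda} together with the classical Yang--Baxter equation for $r$ and the fact that $\ell$ is a Lie algebra action, is a point the paper leaves implicit.
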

\begin{proof}
  Let us compute:
    \begin{align*}
      \pi_\ell^\sharp (\alpha_{[X, Y]})
      &\ot{\eqref{eq:AlgMorph}}{=}
      \pi_\ell^\sharp ([\alpha_X , \alpha_Y]_{\pi_\ell})\\
      &\ot{(*)}{=}
      [\pi_\ell^\sharp(\alpha_X), \pi_\ell^\sharp(\alpha_Y)].
    \end{align*}
    In $(*)$ we used the fact that $\pi_\ell^\sharp$ is a Lie algebra
    morphism with respect to the Lie bracket of one-forms $[a, b]_{\pi_\ell} =
    \Lie_{\pi_\ell^\sharp(a)}b - \Lie_{\pi_\ell^\sharp(b)}a - \D
    \pi_\ell (a,b)$.
  Furthermore, we have:
    \begin{align*}
      \wedge^2 \pi_\ell^\sharp (\alpha\wedge \alpha \circ \delta (X))
      &\ot{\eqref{eq:MC}}{=}
      \wedge^2 \pi_\ell^\sharp (\D \alpha_X)
      \\
      &\ot{(*)}{=}
      \D_{\pi_\ell} \pi_\ell^\sharp (\alpha_X).
    \end{align*}
    In $(*)$ we used $\D_\pi(\wedge^p \pi^\sharp)(\xi)) =
    (\wedge^{p+1}\pi^\sharp)(\D \xi)$.  
\end{proof}

\begin{example}[Dressing generators on $ax+b$]
  Let  us denote by $\lie{s}$ the Lie algebra with basis $H$, $E$ and 
  commutation relation
  \begin{equation}
    [H , E] 
    = 
    2E,
  \end{equation}
  also known as $ax + b$. The corresponding group is denoted by $S$
  and we consider the dressing action $S \times S^* \to S$. Then we
  have that The dressing generators with respect to $\pi_\ell$ are given by the local forms
  \begin{equation}
      \label{eq:dressinglambda}
      \alpha_{H} 
      =
      \frac{1}{y}\D x
      \quad\text{and}\quad
      \alpha_{E} 
      = 
      \frac{1}{2y}\D y.
  \end{equation}
  The complete discussion of this example can be found in the Appendix~\ref{ex:Ax+B}.
\end{example}

%
\subsection{Hamiltonian actions}

Using the notion of dressing generator we give a new definition of
Hamiltonian action in this context.
%

\begin{definition}[Momentum map]
  Let $\Phi : G \times M \to M$ be an action of $(G, \pi_G)$ on $(M,
  \pi)$ and $\alpha_X$ the dressing generator with respect to a Poisson structure
  $\pi_{G^*}$ on $G^*$.
  \begin{definitionlist}
  \item  A momentum map for $\Phi$ is a map
  $J : M \to G^*$ such that
  \begin{equation}
    \varphi(X) 
    = 
    \pi^\sharp (J^* (\alpha_X)),
  \end{equation}
  where $\varphi(X)$ is the fundamental vector field of $\Phi$.  
  In other words, $J$ is defined by the commutativity of the
  following diagram:
  \begin{equation}
    \label{diag:momentumdiag}
    \begin{gathered}
    \includegraphics[height=14ex]{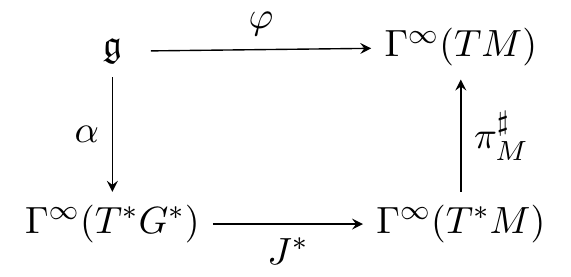}
    \end{gathered}
  \end{equation}
  \item A map $J : M \to G^*$ is said to be $\ell$-equivariant if it
    intertwines the fundamental vector field $\varphi(X)$ and the
    dressing action $\ell_X $ for any $X$.
  \end{definitionlist}
\end{definition}
\begin{lemma}
  \label{lem:PoissonEq}
  The momentum map $J$ defined above is $\ell$-equivariant if and only
  if is Poisson. 
\end{lemma}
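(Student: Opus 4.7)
The plan is to reduce both conditions to pointwise identities involving the dressing generators $\alpha_X$ and then pass from one to the other using the spanning property of the family $\{\alpha_X\}_{X\in\lie{g}}$ in the cotangent space of $G^*$.

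First, I would rewrite $\ell$-equivariance $J_{*}\varphi(X)=\ell_X\circ J$ using the defining relation of the momentum map $\varphi(X)=\pi^\sharp(J^{*}\alpha_X)$ together with the dressing generator identity $\ell_X=\pi_{G^{*}}^\sharp(\alpha_X)$. At a point $m\in M$ this amounts to
\[
dJ_m\circ\pi^\sharp_m\circ (dJ_m)^{T}\,\alpha_X|_{J(m)}\;=\;\pi_{G^{*}}^\sharp\,\alpha_X|_{J(m)},\qquad X\in\lie{g}.
\]
Writing $\pi^{J}_{J(m)}:=dJ_m\circ\pi^\sharp_m\circ(dJ_m)^{T}$ for the pushforward bivector at $J(m)$, the equivariance condition is precisely this identity tested on the family $\{\alpha_X\}$, whereas the Poisson condition $J_{*}\pi=\pi_{G^{*}}$ at image points is the stronger equality $\pi^{J}_{J(m)}=\pi_{G^{*}}|_{J(m)}^\sharp$ of linear maps $T^{*}_{J(m)}G^{*}\to T_{J(m)}G^{*}$.

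The implication Poisson $\Rightarrow$ $\ell$-equivariant is then immediate by evaluating on $\alpha_X$. For the converse I would invoke that when $X$ ranges over a basis of $\lie{g}$, the one-forms $\alpha_X|_u$ span the cotangent space $T^{*}_uG^{*}$ at every point $u$ of $J(M)$: the dimensions match ($\dim\lie{g}=\dim G^{*}$), and linear independence is visible in the explicit $ax+b$ example where $\alpha_H=y^{-1}\D x$ and $\alpha_E=(2y)^{-1}\D y$ form a coframe. Given this spanning, the equivariance identity extends by linearity to arbitrary covectors, yielding $\pi^{J}_{J(m)}=\pi_{G^{*}}|_{J(m)}^\sharp$ at every image point, which is the Poisson property of $J$.

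The main technical obstacle is therefore the verification that $\{\alpha_X|_u : X\in\lie{g}\}$ spans $T^{*}_uG^{*}$ on the relevant subset of $G^{*}$. On the open dense locus where $X\mapsto\alpha_X|_u$ is an isomorphism the argument is clean, and by smoothness of $J$, $\pi$ and $\pi_{G^{*}}$ the resulting identity of bivectors propagates to all of $J(M)$ by continuity. On loci where the dressing generators become linearly dependent one only recovers agreement of $\pi^{J}$ and $\pi_{G^{*}}^\sharp$ modulo $\ker\pi_{G^{*}}^\sharp$, so the nondegeneracy of the map $X\mapsto\alpha_X|_u$ is the point that deserves most care.
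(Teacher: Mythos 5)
Your proposal follows essentially the same route as the paper's own proof: both reduce the Poisson condition $J_*\left(\pi^\sharp(J^*\alpha)\right) = \pi_{G^*}^\sharp(\alpha)$ to the identity tested on the dressing generators, where $\pi^\sharp(J^*\alpha_X)=\varphi(X)$ and $\pi_{G^*}^\sharp(\alpha_X)=\ell_X$ turn it into $\ell$-equivariance. You are in fact more explicit than the paper about the one nontrivial point, namely that the converse direction requires the covectors $\alpha_X|_u$ to span $T^*_uG^*$ along $J(M)$ (so that the pushforward bivector is determined by its values on them) — a hypothesis the paper's terse proof uses silently — and your identification of the degenerate locus of $X\mapsto\alpha_X|_u$ as the place needing care is exactly right.
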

\begin{proof}
  Let us consider generic Poisson structures $\pi$ on $M$ and
  $\pi_{G^*}$ on $G^*$. Thus, $J$ is a Poisson map if and only if
  \begin{equation*}
    J_*(\pi^\sharp (J^*(\alpha))) 
    = 
    \pi_{G^*}^\sharp(\alpha).
  \end{equation*}
  Let $\alpha$ be the dressing generator corresponding to $\pi_{G^*}$.
  Thus $\pi_{G^*}^\sharp(\alpha_X) = \ell_X $ and $\pi^\sharp (J^*
  (\alpha_X)) = \varphi(X)$ and the equation above coincides with the
  $\ell$-equivariance.
\end{proof}
Now the notion of Hamiltonian follows naturally:
\begin{definition}[Hamiltonian action]
  \label{def:Hamiltonian}
  An action $\Phi$ of $(G, \pi_G)$ on $(M,\pi_M)$ is said to be
  Hamiltonian if it is Poisson and is generated by a $\ell$-equivariant momentum map
  $J : M \to G^*$.
\end{definition}
Since in the following we mainly use the infinitesimal action
$\varphi$, we say that it is Hamiltonian whenever the corresponding
$\Phi$ is Hamiltonian.
\begin{remark}
  \begin{remarklist}
   \item If we choose the standard dual Poisson structure
  on $G^*$, the dressing generators are the left-invariant one-forms
  and the above definition boils down to the definition of momentum
  map and Hamiltonian action given by Lu in \cite{Lu1990, Lu1991}.
  \item Let $\lie g$ be a triangular Lie algebra with $r$-matrix $r$, acting
  on a manifold $M$ by $\varphi: \lie g \to \Gamma^\infty(TM)$. We
  denote by $\pi_r$ the Poisson structure induced by $r$ via
  \begin{equation}
   \pi_r
   =
   r^{ij} \varphi(X_i) \wedge \varphi(X_j).
  \end{equation}
  In this case the action $\varphi$ and its global corresponding are automatically
  Poisson. (The proof is the same as the one given in Lemma~\ref{lem:poissonaction}).
  \end{remarklist}
%
 \end{remark}

\begin{example}[Dressing action]
  The easiest example is given by the dressing action. Here the
  momentum map is just the identity.
\end{example} 
\begin{example}[Coadjoint action]
  Let us consider the Poisson structure $\pi_r$ induced by the
  coadjoint action.  Notice that $\pi_r$ does not coincide with the
  linear one.
  As proved in \cite[Section 3.3]{Alekseev2013} one can define a map
  $j: \mathfrak{g}^*\to \lie{d}$ by $j(\xi) = \xi - r(\xi, \argument)$. Thus, the modified exponential is
  given by
    $$
        \mathrm{Exp} : \mathfrak{g}^* \to G^* 
        : \mathrm{Exp}(\xi) := \pr_{G^*}(\exp (j(\xi))).
    $$
  In contrast to the usual exponential map
  it intertwines the coadjoint action with the dressing action, hence it takes
  symplectic leaves to symplectic leaves. In other words, we have
    $$
        \ell_X = \mathrm{Exp}_*\varphi(X).
    $$
        If $G$ is compact with the Lu-Weinstein Poisson structure \cite{Lu1990b}, 
    $\mathrm{Exp}$ is a global 
    diffeomorphism (see \cite[Remarks 3.5]{Alekseev2013}).
    An easy computation shows that $\mathrm{Exp}$ is a momentum map for the coadjoint action.
\end{example}
\begin{remark}
  From the above example we can construct other Hamiltonian actions.
  Given a standard momentum map $\mu: M \to \lie g^*$ which is
  $\ad^*$-equivariant we can always construct a momentum map $J : M
  \to G^*$ by composing $\mu$ and $\mathrm{Exp}$.  For instance,
  observing that $r^\sharp \colon \lie g^* \to \lie g$ intertwines
  adjoint and coadjoint actions
  we can conclude that
  the adjoint action is Hamiltonian with momentum map given by the
  composition of $r^\sharp$ with $\mathrm{Exp}$.
\end{remark}


\begin{remark}
  The reduction can been obtained with various techniques (see
  e.g. \cite{chiara1}). We here remark that the preimage $C =
  J^{-1}(\{ 0 \})$ of a $\ell$-invariant momentum map is a coisotropic
  submanifold and $\mathcal{I}_C$ the corresponding vanishing
  ideal. Thus the reduced algebra can be easily obtained by the
  quotient $\mathcal{B}_C/ \mathcal{I}_C$ where $\mathcal{B}_C = \{ f \in \Cinfty (M) \vert \{f,
  \mathcal{I}_C\} \subseteq \mathcal{I}_C \}$.
 \end{remark}

\section{Hamiltonian Hopf algebra (co)actions}
\label{sec:hopfhamiltonian}

In this section we aim to give a definition of Hamiltonian (co)action
in the setting of Hopf algebra (co)actions and a possible quantization procedure. 
In the same spirit of
Definition~\ref{def:Hamiltonian}, given an Hopf algebra action $\Phi$,
a momentum map has to be an intertwiner between dressing action and
$\Phi$. In order to introduce this notion we first prove that given a classical Hamiltonian
action we can always associate a Hopf algebra action and construct, out of the
classical momentum map, the desired intertwiner.
%

First, we observe that any Lie algebra action gives rise to a Hopf
algebra action.
\begin{lemma}
\label{lem:dressinghopf}
  Consider the infinitesimal action $\varphi : \lie g \to
  \Secinfty (T M)$.  This is equivalent to a Hopf algebra action
  $\Phi: \mathcal{U}(\lie g) \times \Cinfty (M) \to \Cinfty
  (M)$ by setting
  \begin{equation}
    \label{eq:Phi}
    \Phi(X, f) 
    :=
    \Lie_{\varphi_X}f,
  \end{equation}
  where $\Lie$ denotes the Lie derivative.
  Equivalently, it defines a Hopf algebra coaction $\delta_\Phi:
  \Cinfty (M) \to \Cinfty (M) \tensor \Cinfty (G)$
\end{lemma}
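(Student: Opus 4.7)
The plan is to extend $\varphi$ via the universal property of $\mathcal{U}(\lie g)$, verify the module algebra axioms by induction on the length of monomials, and then dualize through the Hopf pairing to obtain the coaction. To start, I would observe that the Lie derivative makes each $\varphi_X$ a derivation of $\Cinfty(M)$, so $X \mapsto \Lie_{\varphi_X}$ is a Lie algebra morphism $\lie g \to \Der(\Cinfty(M)) \subseteq \mathrm{End}(\Cinfty(M))$, the target being equipped with its commutator bracket. The universal property of $\mathcal{U}(\lie g)$ then produces a unique associative algebra morphism $\widetilde\varphi\colon \mathcal{U}(\lie g) \to \mathrm{End}(\Cinfty(M))$, from which $\Phi$ is obtained by currying, $\Phi(X,f) := \widetilde\varphi(X)(f)$. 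This already delivers the representation identity $\Phi(XY, f) = \Phi(X, \Phi(Y, f))$.

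Next, I would verify that $\Phi$ endows $\Cinfty(M)$ with a $\mathcal{U}(\lie g)$-module algebra structure, i.e.,
\begin{equation*}
  \Phi(X, fg) = \Phi(X_{(1)}, f)\, \Phi(X_{(2)}, g)
  \quad\text{and}\quad
  \Phi(X, 1) = \epsilon(X)\, 1
\end{equation*}
for all $X \in \mathcal{U}(\lie g)$ and $f, g \in \Cinfty(M)$. On primitive elements $X \in \lie g$, where $\Delta(X) = X \tensor 1 + 1 \tensor X$ and $\epsilon(X) = 0$, these reduce respectively to the Leibniz rule for the Lie derivative and to $\Lie_{\varphi_X}(1) = 0$. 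They then propagate to all of $\mathcal{U}(\lie g)$ by induction on the length of PBW monomials, using that $\Delta$ and $\epsilon$ are algebra morphisms and that the subset of $X$'s satisfying the module algebra identity is closed under multiplication.

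Finally, the coaction is obtained from $\Phi$ by transposing through the dual pairing $\langle \argument, \argument \rangle$ between $\mathcal{U}(\lie g)$ and $\Cinfty(G)$: concretely, $\delta_\Phi \colon \Cinfty(M) \to \Cinfty(M) \tensor \Cinfty(G)$ is uniquely characterized by
\begin{equation*}
  (\id \tensor \langle X, \argument \rangle) \circ \delta_\Phi(f) = \Phi(X, f),
\end{equation*}
which is precisely the construction recalled in \cite[Prop.~1.6.11]{Majid}; equivalently, if $\varphi$ integrates to a (local) group action $\Phi_G$ then $\delta_\Phi(f)(m, g) = f(\Phi_G(g, m))$. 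The comodule algebra axioms for $\delta_\Phi$ are the transposes of the module algebra axioms already checked for $\Phi$. The main subtlety, and the only step that is not purely formal, is the interpretation of $\Cinfty(M) \tensor \Cinfty(G)$: for a general smooth manifold the naive algebraic tensor product does not suffice, and one needs either a suitable (topological or formal) completion or the equivalent group-theoretic description for $\delta_\Phi$ to be well-defined.
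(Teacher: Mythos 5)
Your proposal is correct and follows essentially the same route as the paper's proof: the derivation property of the fundamental vector fields gives a Lie algebra representation on $\Cinfty(M)$, which extends to a $\mathcal{U}(\lie g)$-module algebra action by the universal property of the enveloping algebra, and the coaction is obtained by dualizing through the pairing as in the cited result of Majid. You supply considerably more detail than the paper does (the explicit induction for the module-algebra axioms and the caveat about completing the tensor product $\Cinfty(M)\tensor\Cinfty(G)$), but the underlying strategy is the same.
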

\begin{proof}
  The Lie algebra elements act as derivations of
  $\Cinfty(M)$, thus $\Phi$ defines a Lie algebra action
  $\varphi\colon \lie{g}\to \Secinfty(TM)$. Since the elements of
  $\lie{g}$ generate $\mathcal{U}(\lie{g})$, the action $\Phi$ is
  given by differential operators with order determined by the natural
  filtration of the universal enveloping algebra.  Conversely, every
  Lie algebra action $\varphi$ of $\lie{g}$ on $M$ determines via the
  fundamental vector fields $\varphi_X \in \Secinfty(TM)$ a
  representation of $\lie{g}$ on $\Cinfty(M)$ by derivations which
  therefore extends to a Hopf algebra action $\Phi$ as above. The
  action $\Phi$ and the coaction $\delta_\Phi$ are always
  equivalent.
\end{proof}
In particular, given the infinitesimal dressing action
$\ell \colon \lie{g}\to \Secinfty(T G^*)$ we obtain the Hopf 
algebra action $\Lambda\colon \mathcal{U}(\lie g) \times \Cinfty (G^*) \to \Cinfty
  (G^*)$ by setting:
\begin{equation}
    \label{eq:Lambda}
    \Lambda(X, f) 
    :=
    \Lie_{\ell_X}f.
  \end{equation}
We denote by $\delta_\Lambda$ the corresponding Hopf algebra coaction.
As a next step we lift the notion of dressing generator to the setting
of Hopf algebra actions. We observe that, given the Lie algebra
representation $\alpha: \lie g \to \lforms{G^*}$, we can define
another Hopf algebra action by using the Lie derivative in the
direction of a one-form $\mathcal{L}_{\alpha}$ which has been defined
by Bhaskara and Viswanath \cite{Bhaskara1988}.  In particular, for
$f\in \Cinfty (G^*)$
\begin{equation}
  \mathcal{L}_{\alpha} f
  =
  \Lie_{\pi^\sharp (\alpha)} f.
\end{equation}
More precisely, we have:
\begin{lemma}
  \label{lem:LambdaLie}
  Given a dressing generator $\alpha: \lie g \to \lforms{G^*}$,
  the corresponding map given by $\mathcal{U}(\lie g) \times \Cinfty
  (G^*) \to \Cinfty (G^*): (X, f) \mapsto \mathcal{L}_{\alpha_X} f$ is
  a Hopf algebra action. Furthemore we have
  \begin{equation}
    \label{eq:LambdaLie}
    \Lambda(X, f) 
    =
    \mathcal{L}_{\alpha_X} f,
  \end{equation}
  where $\Lambda (X, f)$ is given by \eqref{eq:Lambda}.
 \end{lemma}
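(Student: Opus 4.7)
The plan is to observe that the second equation is essentially the definition of a dressing generator, and that the first assertion then follows by reducing to Lemma~\ref{lem:dressinghopf}.

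First I would establish \eqref{eq:LambdaLie}. By the defining property \eqref{eq:DressShift} of a dressing generator, one has $\ell_X = \pi_\ell^\sharp(\alpha_X)$ as a global vector field on $G^*$ (even though $\alpha_X$ itself may only be defined locally, the remark after Definition~\ref{def:DressingGen} guarantees that the contraction with $\pi_\ell$ extends to a smooth vector field). Unfolding the definition $\mathcal{L}_{\alpha_X} f = \Lie_{\pi_\ell^\sharp(\alpha_X)} f$ of Bhaskara--Viswanath and comparing with \eqref{eq:Lambda} yields $\mathcal{L}_{\alpha_X} f = \Lie_{\ell_X} f = \Lambda(X, f)$ at once.

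For the first assertion, the cleanest route is to deduce it from the equality just established. Indeed, applying Lemma~\ref{lem:dressinghopf} to the infinitesimal dressing action $\ell\colon \lie g \to \Secinfty(TG^*)$ shows that $\Lambda$ is a Hopf algebra action of $\mathcal{U}(\lie g)$ on $\Cinfty(G^*)$; by \eqref{eq:LambdaLie} the map $(X,f)\mapsto \mathcal{L}_{\alpha_X} f$ coincides with $\Lambda$ on generators and hence extends in the same way to $\mathcal{U}(\lie g)$.

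Alternatively, one may argue intrinsically without invoking $\ell$: each $\mathcal{L}_{\alpha_X}$ is the Lie derivative along a smooth vector field and is therefore a derivation of $\Cinfty(G^*)$, and Lemma~\ref{lem:poissonaction}(i) states precisely that $X \mapsto \pi_\ell^\sharp(\alpha_X)$ is a Lie algebra morphism, so $[\mathcal{L}_{\alpha_X}, \mathcal{L}_{\alpha_Y}] = \mathcal{L}_{\alpha_{[X,Y]}}$. This promotes $X \mapsto \mathcal{L}_{\alpha_X}$ to a Lie algebra representation of $\lie g$ on $\Cinfty(G^*)$ by derivations, and Lemma~\ref{lem:dressinghopf} then furnishes the Hopf algebra action.

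I do not anticipate a real obstacle: both clauses reduce to the definition of the dressing generator combined with results already stated. The only point requiring care is the passage between the locally defined one-form $\alpha_X$ and the globally defined vector field $\pi_\ell^\sharp(\alpha_X)$, but this is handled by the remark following Definition~\ref{def:DressingGen}, so no additional argument is required.
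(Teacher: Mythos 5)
Your proposal is correct and follows essentially the same route as the paper: the identity \eqref{eq:LambdaLie} is read off from the defining property $\ell_X = \pi_\ell^\sharp(\alpha_X)$ together with the Bhaskara--Viswanath definition $\mathcal{L}_{\alpha_X} = \Lie_{\pi_\ell^\sharp(\alpha_X)}$, and the Hopf algebra action property is obtained by the same extension-by-derivations argument as in Lemma~\ref{lem:dressinghopf}. The paper merely states these two points in the opposite order; your added care about the local versus global nature of $\alpha_X$ is a reasonable (and harmless) refinement.
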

\begin{proof}
  First, as in Lemma~\ref{lem:dressinghopf} the map $\mathcal{U}(\lie g) \times \Cinfty (G^*) \to \Cinfty
  (G^*): (X, f) \mapsto \mathcal{L}_{\alpha_X}$ immediately satisfies
  the condition to be a Hopf algebra action.
  Also, from the definition of dressing generator
  we have $\ell_X = \pi_\ell^\sharp (\alpha_X)$. Thus
  \begin{align*}
    \Lambda(X, f) 
    &= 
    \Lie_{\ell_X}f
    \\
    &=
    \Lie_{\pi^\sharp_\ell (\alpha_X)}f
    \\
    &=
    \mathcal{L}_{\alpha_X}f.
  \end{align*}
\end{proof}
%
%
%
%
%
%
Now, let us consider a Hamiltonian action $\varphi : \lie g \to
\Secinfty(T M)$ with momentum map $J: M \to G^*$. Notice that its
pullback of functions $J^*: \Cinfty(G^*) \to \Cinfty(M)$ is an algebra
morphism.  With an abuse of notation, we also refer to $J^*$ as the
pullback of forms. Since the latter is always defined, we can extend
$J$ to a map $J^*$ acting on $\mathcal{L}_\alpha$ by
\begin{equation}
  \label{eq:JJ}
  J^* \mathcal{L}_\alpha 
  :=
  \mathcal{L}_{J^*\alpha} \circ J^*.
\end{equation}

\begin{theorem}
  Let $\varphi : \lie g \to \Secinfty(TM)$ be an Hamiltonian action
  with momentum map $J: M \to G^*$ and consider the corresponding Hopf
  algebra action $\Phi : \mathcal{U}(\lie g) \times \Cinfty (M) \to
  \Cinfty (M)$ given by $\Phi(X) = \Lie_{\varphi(X)}$.  Then we have:
  \begin{theoremlist}
   \item The pullback $J^*: \Cinfty(G^*) \to \Cinfty(M)$ of $J$ intertwines $\Phi$ and 
   the Hopf algebra action $\Lambda$ corresponding to the dressing action via \eqref{eq:Lambda}.
   \item The pullback $J^*: \Cinfty(G^*) \to \Cinfty(M)$ of $J$ intertwines the corresponding Hopf
  algebra coaction $\delta_\Phi$ and the Hopf algebra coaction $\delta_\Lambda$ corresponding to the dressing action.
  \end{theoremlist}
\end{theorem}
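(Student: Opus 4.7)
The plan is to verify part (i) by a direct computation on the generators of $\mathcal{U}(\lie{g})$ and then obtain part (ii) by dualizing under the action-coaction correspondence recalled at the end of Section~\ref{sec:preliminaries}.

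For part (i), I would fix $X \in \lie{g}$ and $g \in \Cinfty(G^*)$ and combine Lemma~\ref{lem:LambdaLie} with the definition \eqref{eq:JJ} to obtain
\begin{align*}
J^* \Lambda(X, g)
= J^* \mathcal{L}_{\alpha_X} g
= \mathcal{L}_{J^*\alpha_X}(J^* g)
= \Lie_{\pi^\sharp(J^* \alpha_X)}(J^* g).
\end{align*}
The defining relation $\varphi(X) = \pi^\sharp(J^*\alpha_X)$ of the momentum map then identifies the right-hand side with $\Lie_{\varphi(X)}(J^*g) = \Phi(X, J^*g)$, proving the intertwining on $\lie{g}$. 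Since $\Phi$ and $\Lambda$ are both $\mathcal{U}(\lie{g})$-module algebra structures, the chain $\Phi(XY, J^*g) = \Phi(X, \Phi(Y, J^*g)) = \Phi(X, J^*\Lambda(Y, g)) = J^*\Lambda(XY, g)$ propagates the identity to products of generators, and linearity extends it to all of $\mathcal{U}(\lie{g})$.

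Part (ii) then follows by dualization. Writing $\delta_\Lambda(g) = g_{(0)} \tensor g_{(1)}$ in Sweedler notation, the correspondence between actions and coactions (see \cite[Prop.~1.6.11]{Majid}) is encoded by $\Lambda(X, g) = g_{(0)}\langle X, g_{(1)}\rangle$ for all $X \in \mathcal{U}(\lie{g})$, and analogously $\Phi(X, f) = f_{(0)}\langle X, f_{(1)}\rangle$ with $\delta_\Phi(f) = f_{(0)} \tensor f_{(1)}$. Pairing the candidate identity $(J^* \tensor \id)\delta_\Lambda(g) = \delta_\Phi(J^* g)$ against $X$ in the second tensor slot yields precisely the action intertwining proven in part (i); conversely, the non-degeneracy of the pairing $\mathcal{U}(\lie{g}) \times \Cinfty(G) \to \mathbb{K}$ allows one to reconstruct the coaction identity from its evaluations on every $X$.

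The main point requiring care is that the dressing generator $\alpha_X$ is only a locally defined one-form on $G^*$, so the equality $\mathcal{L}_{\alpha_X} g = \Lie_{\pi_\ell^\sharp(\alpha_X)} g$ should be read as an identity of globally defined functions obtained through the global vector field $\ell_X = \pi_\ell^\sharp(\alpha_X)$; the same remark applies to the pullback $J^*\alpha_X$ and its associated global field $\varphi(X) = \pi^\sharp(J^*\alpha_X)$. Once this local/global reading is granted, no further analytic or algebraic obstacle arises, and the theorem reduces to the calculation above combined with the action-coaction duality.
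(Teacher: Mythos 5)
Your proposal is correct and follows essentially the same route as the paper: part (i) is the identical chain of equalities $J^*\Lambda(X,f) = J^*\mathcal{L}_{\alpha_X}f = \mathcal{L}_{J^*\alpha_X}J^*f = \Lie_{\pi^\sharp(J^*\alpha_X)}J^*f = \Lie_{\varphi(X)}J^*f = \Phi(X,J^*f)$ using \eqref{eq:JJ} and the momentum map condition, and part (ii) is obtained by dualizing under the action--coaction correspondence. Your explicit extension from generators to all of $\mathcal{U}(\lie g)$ and your remark on the local nature of $\alpha_X$ are welcome additions of rigor, but they do not change the argument.
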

\begin{proof}
  The two claims above can be rephrased by saying that $J^*$ defines a $\mathcal{U}(\lie g)$-module
  algebra morphism and $\Cinfty(G)$-comodule algebra morphism. 
  \begin{theoremlist}
   \item We already observed that $J^*: \Cinfty(G^*) \to \Cinfty(M)$ is an
  algebra morphism.  Thus, we only need to prove that it is a module
  morphism, i.e. the commutativity of the following diagram:
  \begin{equation}
    \label{diag:module}
    \begin{gathered}
      \includegraphics[height=16ex]{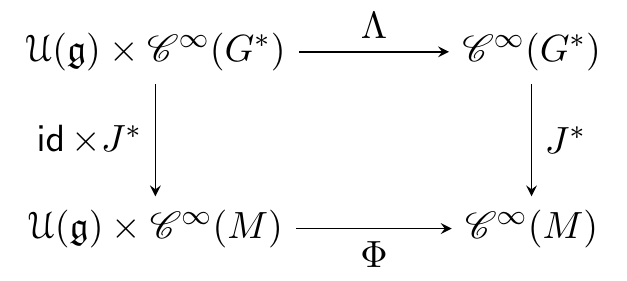}
    \end{gathered}
  \end{equation}
  In other words, we need to prove 
  \begin{equation}
    \label{eq:equiv}
    \Phi(X, J^* f)
    =
    J^*(\Lambda(X, f)).
  \end{equation}
  Using \eqref{eq:JJ} we can easily compute:
  \begin{align*}
    J^*(\Lambda(X, f))
    &=
    J^* (\mathcal{L}_{\alpha_X} f)
    \\
    &=
    \mathcal{L}_{J^*\alpha_X}  J^* f
    \\
    &=
    \Lie_{\pi^\sharp (J^* (\alpha_X))}J^* f
    \\
    &=
    \Lie_{\varphi(X)}J^* f
    \\
    &=
    \Phi(X, J^* f).
  \end{align*}
  Here we used the fact that, from Definition~\ref{def:Hamiltonian},
  we have $\varphi(X) = \pi^\sharp (J^* (\alpha_X))$.
   \item Given the Hopf algebra action $\Phi$ we can always find the
   corresponding Hopf algebra coaction $\delta_\Phi$, as discussed in
   Section~\ref{sec:preliminaries}.  Thus we can immediately state the
   dual version of the above claim. In fact, dualizing the commutative diagram 
   \eqref{diag:module} we immediately get the following commutative diagram
  \begin{equation}
    \label{diag:comodule}
    \begin{gathered}
      \includegraphics[height=16ex]{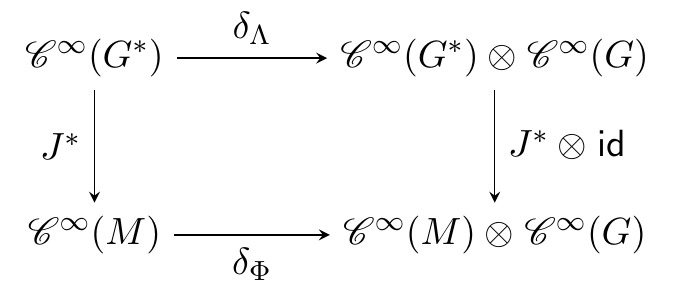}
    \end{gathered}
  \end{equation}
  which gives the comodule morphism condition $\delta_\Phi \circ J^* =
  (J^* \tensor \id) \circ \delta_\Lambda$.  Since $J^*$ is an algebra
  morphism the claim is proved.
  \end{theoremlist}
\end{proof}
Finally, the above discussion motivates the following definition.
Let $\Cinfty (M)$ be a $\mathcal{U}(\lie g)$-module algebra
where the module structure is given by a generic Hopf algebra action 
$\Phi : \mathcal{U}(\lie g) \times \Cinfty (M) \to \Cinfty (M)$. Equivalently,
$\Cinfty (M)$ is endowed with a $\Cinfty(G)$-comodule algebra structure.
Furthermore, given the dressing action $\ell$ we showed that $\Cinfty(G^*)$
automatically turns into a $\mathcal{U}(\lie g)$-module algebra where the 
Hopf algebra action $\Lambda$ is given by \eqref{eq:Lambda} (and equivalently
into a $\Cinfty(G)$-comodule algebra).
\begin{definition}[Hamiltonian (co)action]
\label{def:hopfhamilt}
  \begin{definitionlist}
  \item A Hopf algebra action $\Phi : \mathcal{U}(\lie g) \times
    \Cinfty (M) \to \Cinfty (M)$ is said to be Hamiltonian if there exist a
    $\mathcal{U}(\lie g)$-module algebra morphism, called momentum map, $\mathbf{J}: \Cinfty(G^*) \to
    \Cinfty(M)$. In other words, $\Phi$ is Hamiltonian if it allows a map $\mathbf{J}$
    satisfying the following condition:
     \begin{equation}
     \Phi(X, \mathbf{J} f)
     =
     \mathbf{J}(\Lambda(X, f)).
  \end{equation}
  \item A Hopf algebra coaction $\delta_\Phi : \Cinfty(M) \to
    \Cinfty(M) \tensor \Cinfty(G)$ is said to be Hamiltonian if there exist
    $\Cinfty(G)$-module algebra morphism $\mathbf{J}$, called momentum
    map, which intertwines it with the Hopf algebra coaction
    $\delta_\Lambda$ corresponding to the dressing action.
  \end{definitionlist}
\end{definition}

\subsection{Quantum Hamiltonian coactions via 2-cocycles}


In this section we prove that, using Drinfeld approach, we obtain a
quantization of the Hamiltonian coactions as in
Definition~\ref{def:hopfhamilt}. Since actions and coactions are
completely equivalent we here prefer to focus only on the coaction
case.

Let us consider a twist $\mathcal{F}$ on $\mathcal{U}(\lie g)$ with
corresponding 2-cocycle $\gamma$ on $\Cinfty(G)$.  As seen in
Definition~\ref{def:quantumgroup}, the 2-cocycle $\gamma$ induces a
deformed product $\star_\gamma$ and we denote by $\Cinfty_\hbar(G)$
the corresponding quantum group.
Furthermore, we obtain a deformed product on the comodule algebras
$\Cinfty(M)$ and $\Cinfty(G^*)$.  More precisely, the action $\varphi
: \lie g \to \Secinfty(TM)$ induces a star product $\star_\varphi$ on
$M$ whose semiclassical limit is the Poisson structure $\pi_r$ induced
by $r$ via $\varphi$.
Similarly, the dressing action $\ell: \lie g \to \Secinfty(TG^*)$
induces a star product $\star_\ell$ on $G^*$.  Let us denote by
$\Cinfty_\hbar(G^*)$ the deformed algebra given by the pair
$(\Cinfty(G^*)[[\hbar]], \star_\ell)$ and by $\Cinfty_\hbar(M)$ the
pair $(\Cinfty(M)[[\hbar]], \star_\varphi)$. Notice that
$\Cinfty_\hbar(G^*)$ and $\Cinfty_\hbar(M)$ are now
$\Cinfty_\hbar(G)$-comodule algebras. In other words, the coactions
\begin{equation}
  \label{eq:coactions}
  \delta_\Phi : \Cinfty_\hbar(M) \to \Cinfty_\hbar(M) \tensor \Cinfty_\hbar(G)
  \quad\text{and}\quad
  \delta_\Lambda : \Cinfty_\hbar(G^*) \to \Cinfty_\hbar(G^*) \tensor \Cinfty_\hbar(G)
\end{equation}
are morphisms of algebras.  Thus we can state our main result.
\begin{theorem}
  Let $\varphi : \lie g \to \Secinfty(TM)$ be an Hamiltonian action
  with momentum map $J: M \to G^*$.  Then the corresponding quantum group
  coaction $\delta_\Phi : \Cinfty_\hbar(M) \to
  \Cinfty_\hbar(M) \tensor \Cinfty_\hbar(G)$ is Hamiltonian in the
  sense of Definition~\ref{def:hopfhamilt}.
\end{theorem}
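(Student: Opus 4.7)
The plan is to show that the classical momentum map $J^*$, extended $\hbar$-linearly to $\mathbf{J}\colon \Cinfty_\hbar(G^*) \to \Cinfty_\hbar(M)$, continues to witness the Hamiltonian character of the coaction after twist deformation. The previous theorem already supplies two key classical facts: $J^*$ is an algebra morphism $\Cinfty(G^*) \to \Cinfty(M)$, and it satisfies the module-morphism identity $\Phi(X, J^*f) = J^*(\Lambda(X, f))$ for every $X \in \mathcal{U}(\lie g)$, whence also the dual intertwining $\delta_\Phi \circ J^* = (J^* \tensor \id) \circ \delta_\Lambda$. What has to be checked after quantization is (a) that $\mathbf{J}$ intertwines the deformed products $\star_\ell$ and $\star_\varphi$, and (b) that it still intertwines the coactions $\delta_\Lambda$ and $\delta_\Phi$ now viewed as morphisms landing in the quantum group $\Cinfty_\hbar(G)$.

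For step (a) I would simply unwind the universal deformation formula of Lemma~\ref{lem:startwist}. Writing $\mathcal{F}^{-1} = \mathcal{F}^{-\alpha} \tensor \mathcal{F}^{-1}_\alpha$ in Sweedler-like notation, we have $f \star_\ell g = m_{G^*}\bigl(\Lambda(\mathcal{F}^{-\alpha}, f) \tensor \Lambda(\mathcal{F}^{-1}_\alpha, g)\bigr)$. Applying $J^*$, the classical algebra-morphism property gives $J^* \circ m_{G^*} = m_M \circ (J^* \tensor J^*)$, and the module identity applied to each tensor factor yields
\begin{equation*}
J^*(f \star_\ell g) = m_M\bigl(\Phi(\mathcal{F}^{-\alpha}, J^*f) \tensor \Phi(\mathcal{F}^{-1}_\alpha, J^*g)\bigr) = J^*f \star_\varphi J^*g,
\end{equation*}
so $\mathbf{J}$ is an algebra morphism for the twisted products. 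For step (b) I would observe that the coactions $\delta_\Phi$ and $\delta_\Lambda$, as $\mathbb{K}[[\hbar]]$-linear maps, are just the $\hbar$-linear extensions of their classical counterparts obtained by dualizing $\Phi$ and $\Lambda$; only the algebra structures on source and target are deformed, not the underlying linear map. Hence the classical intertwining $\delta_\Phi \circ J^* = (J^* \tensor \id) \circ \delta_\Lambda$ persists verbatim. Combining (a) and (b), $\mathbf{J}$ is a $\Cinfty_\hbar(G)$-comodule algebra morphism intertwining $\delta_\Phi$ and $\delta_\Lambda$, which is exactly the condition of Definition~\ref{def:hopfhamilt}.

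The main technical point, and in my view the only real content of the argument, lies in step (a): one must make sure that the module-morphism property of $J^*$ with respect to single elements of $\mathcal{U}(\lie g)$ transports correctly to the action of $\mathcal{F}^{-1}$ on tensor products. This is essentially automatic because the tensor action on both $\Cinfty(G^*)^{\tensor 2}$ and $\Cinfty(M)^{\tensor 2}$ is defined through the \emph{same} (undeformed) coproduct, so the intertwining distributes over the two tensor slots uniformly. Once this bookkeeping is made explicit, the conclusion follows without further computation and the identification of $\mathbf{J}$ with $J^*$ shows that the classical momentum map itself quantizes, confirming that Drinfel'd's twist approach is compatible with the Hamiltonian structure.
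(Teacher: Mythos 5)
Your proposal is correct and follows essentially the same route as the paper: the persistence of the coaction intertwining is observed to be automatic because only the algebra structures (not the underlying linear maps) are deformed, and the algebra-morphism property for $\star_\ell$ and $\star_\varphi$ is checked by unwinding the universal deformation formula and applying the classical intertwining identity $\Phi(X, J^*f) = J^*(\Lambda(X,f))$ factorwise to $\mathcal{F}^{-1}$. The paper performs exactly this computation, extending $\Lambda$ to tensor products in the same way you describe.
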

\begin{proof}
  Since in the Drinfeld approach the coactions do not change but they
  only intertwine different algebraic structures, the classical
  momentum map is still a comodule morphism as in
  Lemma~\ref{lem:Jcomodule}.  More explicitly, the diagram
  \begin{equation}
    \begin{gathered}
      \includegraphics[height=16ex]{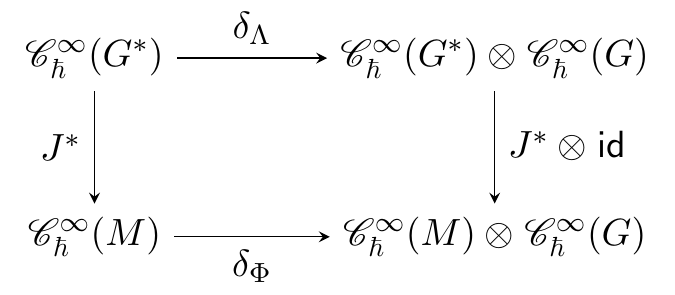}
    \end{gathered}
  \end{equation}
  commutes. Thus, we only need to prove that $J^* : \Cinfty_\hbar(G^*)
  \to \Cinfty_\hbar(M)$ is a morphism of algebras. This can be
  immediately checked by using the UDF \eqref{eq:StarTwist} and
  Lemma~\ref{lem:Jmodule}.  We can extend the action \eqref{eq:Lambda}
  by
  \begin{equation}
    \Lambda(\mathcal{F}, f\tensor g) 
    =
    \Lambda(\mathcal{F}_\alpha, f) \tensor \Lambda(\mathcal{F}^\alpha, g).
  \end{equation}
  As a consequence, we have:
  \begin{align*}
    J^* (f \star_\ell g) 
    &=
    J^* (m (\Lambda(\mathcal{F}, f\tensor g)))
    \\
    &=
    J^* (m (\Lambda(\mathcal{F}_\alpha, f) , \Lambda(\mathcal{F}^\alpha, g)))
    \\
    &=
    (J^* \Lambda(\mathcal{F}_\alpha, f) )(J^*  \Lambda(\mathcal{F}^\alpha, g))
    \\
    &\ot{\eqref{eq:equiv}}{=} 
    \Phi(\mathcal{F}_\alpha, J^*f)\Phi(\mathcal{F}^\alpha, J^*g)
    \\
    &=
    m(\Phi(\mathcal{F}^{-1}, J^*f\tensor J^*g))
    \\
    &=
    J^* f \star_\varphi J^* g .
  \end{align*}
\end{proof}

\appendix

\section{Dressing generators on $ax+b$}
  \label{ex:Ax+B}

In this appendix we discuss a concrete example of dressing generators.
Let $\lie{s}$ be the Lie algebra with basis $H$, $E$ and commutation
relation
\begin{equation}
  [H , E] 
  = 
  2E ,
\end{equation}
also known as the Lie algebra $ax+b$. Consider the triangular $r$-matrix $r = H \wedge E$. This induces
the Lie bialgebra structure on $\lie g^*$:
\begin{align*}
  \delta (H) 
  &= [r, H \otimes 1 + 1 \otimes H] \\
  &= H \otimes [E, H] - [E, H] \otimes H \\
  &= -2 H \wedge E    ,
\\
\delta (E) 
  &= [r, E \otimes 1 + 1 \otimes E] \\
  &= 0.    
\end{align*}
As a consequence, the dual basis $H^*$, $E^*$
satisfies the following commutation relation:
\begin{equation}
  \label{eq:Dual}
        [H^*, E^*] 
	= 
	-2 H^* . 
\end{equation}
Note that the element $r$ corresponds to the Poisson structure
associated to the bilinear symplectic structure $\omega$ on $\lie{s}$
defined by $\omega(H,E) := 1$. Within this set up the Lie algebra
structure (\ref{eq:Dual}) on $\lie{s}^*$ is simply obtained by
transporting the Lie bracket on $\lie{s}$ to $\lie{s}^*$ under the
linear musical isomorphism $^{\flat} : \lie{s}\to \lie{s}^* : X
\mapsto \bem{X} := \iota_X \omega$ i.e.
\begin{equation}
  [\bem{X} , \bem{Y}]_{\lie{s}^*}
  := 
  \bem{[X,Y]}.
\end{equation}  
In our case we have:
\begin{equation}
  \label{eq:Flat}
  \bem{H} 
  = 
  E^* 
  \qquad  
  \bem{E} 
  =
  -H^*
\end{equation}
The double $\lie{g} := D(\lie{s})$ is given by the vector space
$\lie{s} \oplus \lie{s}^*$ equipped with the following Lie brackets (using the
notation induced by musical isomorphism)
\begin{equation}
  \begin{split}
    \label{eq:Double}
	  [H,E] &= 2E,  \quad
	  [\bem{H} , \bem{E}] = 2 \bem{E},  \quad
	  [\bem{H} , H] = 2(\bem{H} - H), \\ 
	  [H , \bem{E}] &= 2E, \quad
	  [E , \bem{E}] = 0, \quad
	  [E , \bem{H}] = -2 \bem{E}.
  \end{split} 
\end{equation}
We observe that the the first derivative $\lie g' :=[\lie g,\lie g]$
is spanned by $E$, $\bem{E}$ and $F:= \bem{H}-H$ and admits the
table:
\begin{equation*}
  [E,F] = 2E-2\,\bem{E} =:Z, \quad [E,Z]=[F,Z]=0.
\end{equation*}
Thus, $\lie g'$ is isomorphic to the Heisenberg algebra $\lie h_1:=
V\oplus \mathbb{R}Z$ associated to the symplectic plane $(V, \Omega)$
spanned by $E$ and $F$ and structured by
\begin{equation*}
  [v+zZ,v'+z'Z]
  =
  \Omega(v,v')Z
  \quad\text{with}\quad 
  v,v'\in V\quad\text{and}\quad
  \Omega(E,F):=1.
\end{equation*}
In this setting, the double $D(\lie s)$ can be viewed as the
semidirect product of the Lie algebra $\lie h_1$ with the abelian Lie
algebra $\mathbb{R} H$:
\begin{equation*}
  D(\lie s)
  \simeq
  \mathbb R\ltimes_\rho\lie h_1
\end{equation*}
whose Lie algebra homomorphism
\begin{equation*}
  \rho: \mathbb{R}H \to \Der(\lie h_1)
\end{equation*}
is  defined in the basis ${E,F,Z}$ by
\begin{equation*}
  \rho(H)
  :=
  \left(\begin{matrix} 
    2 & 0 & 0 \\ 0 & -2 & 0 \\ 0 & 0 & 0 
  \end{matrix}\right).
\end{equation*}
\begin{lemma}
  \label{lem:DoubleGroup}
  Let $\lie s = ax + b$. Then we have:
  \begin{lemmalist}
  \item The connected simply connected Lie group $ G := D(\lie{s}), $
    with Lie algebra given by the vector space $ \lie{g} := D(\lie{s})
    := \lie s \oplus \lie{s}^* $ with Lie algebra structure given by
    \eqref{eq:Double}, is diffeomorphic to the product manifold:
    \begin{equation}
      G = \mathbb{R} \times V \times \mathbb{R}.
    \end{equation}
  \item Within this model, the group law is given by
    \begin{equation}
      \label{eq:DoubleW}
      (a,v,z)\cdot (a',v',z')
      = 
      (a + a', v + e^{2a B}v', z + z' + \frac{1}{2} \Omega(v, e^{2a B}v'))
    \end{equation}
    where 
    \begin{equation}
      B
      :=
      \frac{1}{2}\rho(H)\at{V}
      =
      \left(
      \begin{array}{cc}
      1&0
      \\
      0&-1
      \end{array}
      \right)
      \quad
      \mbox{in basis}\quad\{E,F\}  .
    \end{equation}
    \item Realizing the Lie algebra $\lie{g}$ as 
      \begin{equation}
	\lie{g} 
	= \mathbb{R} H \oplus V \oplus \mathbb{R} Z
	= \{(a_0,v_0,z_0)\},
      \end{equation}
      the exponential mapping is given by
      \begin{equation}
        \label{eq:ExpD}
        \exp(a_0,v_0,z_0)
        =
        \left (a_0 , \frac{1}{2a_0}(e^{2a_0 B}-I) B v_0 , z_0 + \frac{1}{4a_0} \Omega( B v_0 , v_0)
        + \frac{1}{8a_0^2} \Omega(v_0 , e^{2a_0 B}v_0)\right).
      \end{equation}
  \end{lemmalist}
\end{lemma}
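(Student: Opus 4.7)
The plan is to proceed part by part, exploiting the semi-direct product decomposition $D(\lie s)\simeq \mathbb R\ltimes_\rho\lie h_1$ already established in the discussion preceding the lemma.

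For part (i), I would observe that the Heisenberg Lie algebra $\lie h_1=V\oplus\mathbb R Z$ is nilpotent of step two, so its exponential map is a global diffeomorphism onto the associated simply connected Heisenberg group $H_1$, which is therefore diffeomorphic to $V\times\mathbb R$. The simply connected integration of $\mathbb R\ltimes_\rho\lie h_1$ is $\mathbb R\ltimes H_1$, which as a smooth manifold is $\mathbb R\times V\times\mathbb R$; simple connectedness is inherited from both factors.

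For part (ii), I would integrate the derivation $\rho(H)$ to the automorphism $e^{a\rho(H)}$ of $H_1$. Since $\rho(H)$ vanishes on the center $\mathbb R Z$ and equals $2B$ on $V$, this automorphism acts as $e^{2aB}$ on $V$ and trivially on $\mathbb R Z$. Substituting this action together with the standard Heisenberg group law $(v,z)(v',z')=(v+v',z+z'+\tfrac12\Omega(v,v'))$ into the semi-direct product formula
\[(a,h)\cdot(a',h')=(a+a',\,h\cdot(e^{a\rho(H)}h')),\]
produces exactly the claimed law \eqref{eq:DoubleW}.

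For part (iii), the strategy is to compute the one-parameter subgroup $\gamma(t)=(a(t),v(t),z(t))=\exp(tX)$ with $X=a_0H+v_0+z_0Z$ by using $\gamma(t+s)=\gamma(t)\gamma(s)$ together with \eqref{eq:DoubleW}. Differentiating at $s=0$ yields the ODE system
\[\dot a(t)=a_0,\qquad \dot v(t)=e^{2a_0tB}v_0,\qquad \dot z(t)=z_0+\tfrac12\Omega(v(t),e^{2a_0tB}v_0),\]
with $\gamma(0)=e$. The first two equations integrate directly and, using the identity $B^2=I$, give $a(t)=a_0t$ and $v(t)=\tfrac{1}{2a_0}(e^{2a_0tB}-I)Bv_0$. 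The main obstacle is the $z$-component, where the integral $\int_0^1\Omega(v(s),e^{2a_0sB}v_0)\,ds$ must be evaluated. The key observation is that $B\in\mathfrak{sp}(V,\Omega)$, so $e^{tB}$ preserves $\Omega$; combined with $B^2=I$ this lets one simplify the cross term, collapsing $\Omega(Bv_0,B(e^{2a_0B}-I)v_0)$ to $-\Omega(v_0,e^{2a_0B}v_0)$ and producing the two boundary contributions in \eqref{eq:ExpD} with coefficients $\tfrac1{4a_0}$ and $\tfrac1{8a_0^2}$. Setting $t=1$ gives the stated formula for $\exp(a_0,v_0,z_0)$.
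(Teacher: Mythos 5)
Your proposal is correct and follows essentially the same route as the paper: both build $G$ as the semi-direct product $\mathbb{R}\ltimes H_1$ with the hyperbolic one-parameter subgroup of $\group{Sp}(V,\Omega)$ acting by centre-fixing automorphisms, which yields \eqref{eq:DoubleW} directly. The only (cosmetic) difference is in part \textit{iii.)}: the paper verifies that the stated formula satisfies the one-parameter subgroup property and has the correct derivative at $t=0$, whereas you derive and integrate the corresponding ODE system forward; the computations — in particular the use of $B\in\mathfrak{sp}(V,\Omega)$ and $B^2=I$ to get $\Omega(Bu,Bw)=-\Omega(u,w)$ — are the same.
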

\begin{proof}
  The connected simply connected Lie group $H_1$ corresponding to
  $\lie{h}_1$ can be modelled on $V\times \mathbb R Z$ with group law
  given by
  \begin{equation}
    (v,z)\cdot(v',z')
    = (v + v', z + z'+ \frac{1}{2}\Omega(v,v')).
  \end{equation}
  Within this setting, we observe that the symplectic group
  $\group{Sp}(V,\Omega)$ (which in our two-dimensional case just
  coincides with the group $\group{SL}_2(\mathbb R)$) acts by
  centre-fixing group-automorphisms on $H_1$ under:
  \begin{equation}
    R : \group{Sp}(V, \Omega) \times H_1 \to H_1 
    : (\mathbf{a} , (v,z))\mapsto R_\mathbf{a}(v,z)
    := (\mathbf{a}(v), z).
  \end{equation}
  Every sub-group $A$ of $\group{Sp}(V,\Omega)$ therefore determines
  the semi-direct product group 
  \begin{equation}
    G := A\ltimes_RH_1
  \end{equation}
  modelled on the Cartesian product $G = A\times H_1$ with group law
  defined by ($\mathbf{a},\mathbf{a}'\in A$):
  \begin{equation}
    (\mathbf{a},v,z)\cdot(\mathbf{a}',v',z')
    :=
    (\mathbf{a} \cdot \mathbf{a}', (v,z) \cdot R_{\mathbf{a}}(v',z'))
    =(\mathbf{a} \cdot \mathbf{a}', v + \mathbf{a}(v') , z + z'+ \frac{1}{2}\Omega(v,\mathbf{a}(v'))).
  \end{equation}
  In the case
  \begin{equation}
    A 
    := \left\lbrace \exp(2a B)
    = \left(\begin{matrix} e^{2a} & 0 \\   0 & e^{-2a} \end{matrix}\right) \right\rbrace_{a\in\mathbb R}
  \end{equation}
  the semi-direct product is therefore the Lie group 
  \begin{equation}
    G = \mathbb R \times H_1
  \end{equation}
  with group law given by (\ref{eq:DoubleW}).  One then readily
  verifies that the given expression in (\ref{eq:ExpD}) satisfies the
  condition $\exp t(a_0,v_0,z_0)\cdot \exp s(a_0,v_0,z_0) =
  \exp(t+s)(a_0,v_0,z_0)$ for all $s,t\in\mathbb R$. The fact that
  $B^2 = \mathbb I$ then implies $\frac{d}{dt}\at{t=0} \exp
  t(a_0,v_0,z_0)=(a_0,v_0,z_0)$.  The computation of the Lie algebra
  of $G$ is then performed using the expression of the above
  exponential mapping (\ref{eq:ExpD}). It identifies with the one of
  $\lie{g}$.
\end{proof}
We now pass to realize $\lie{s}$ and $\lie{s}^*$ in the double $G$.
For this we start from expressing the generators at the Lie algebra
level:
\begin{equation}
    H^* = -(\frac{1}{2}Z+E)\quad\text{ and}\quad E^* = H + F .
\end{equation}
The coordinates on $\lie{s}^*$ are given by
$
    (\nu,\kappa)_*
    := 
    \exp \nu   E^* \exp \kappa  H^*
$
where 
\begin{equation}
  \exp \kappa H^* = \exp \kappa(-E-\frac{1}{2}Z)=(0, -\kappa E, -\frac{\kappa}{2})
  \quad 
  \text{and}
  \quad
  \exp \nu  E^* =\left(\nu, \frac{1}{2}(e^{-2\nu}-1)F, 0 \right)  .
\end{equation}
Using the group law (\ref{eq:DoubleW}) we get
\begin{equation}
  \label{eq:DualCoord}
  (\nu, \kappa)_{*}
  =
  \left(\nu, -\kappa e^{2\nu}E,\frac{1}{2}(e^{-2\nu}-1)F, -\frac{\kappa}{4}(1+e^{2\nu})\right)  .
\end{equation}
Similarly, we have
\begin{equation}
  \label{eq:ANCoord}
  (a,n) 
  := 
  \exp(aH)\exp(nE)
  =
  \left(a, e^{2a}nE ,0\right)  .
\end{equation}
\begin{lemma}
  Let us consider the dressing action $S \times S^* \to S$. Then we
  have
  \begin{lemmalist}
  \item The dressing generators with respect to the standard dual Poisson structure
  $\pi_*$ are given by the left-invariant forms
    \begin{equation}
      \label{eq:dressingstar}
      \alpha_{H}= -\frac{1}{y+1}\D x 
      \quad\text{and}\quad
      \alpha_{E}= \frac{1}{2(y+1)}\D y
    \end{equation}
    \item The dressing generators with respect to $\pi_\ell$ are given by the local forms
    \begin{equation}
      \label{eq:dressinglambda}
      \alpha_{H} 
      =
      \frac{1}{y}\D x
      \quad\text{and}\quad
      \alpha_{E} 
      = 
      \frac{1}{2y}\D y
    \end{equation}
    \end{lemmalist}
\end{lemma}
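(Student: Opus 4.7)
The plan is to work entirely inside the explicit model of the double group $G = D(\lie s)$ provided by Lemma~\ref{lem:DoubleGroup}. First I would realise both $S \hookrightarrow G$ and $S^* \hookrightarrow G$ as submanifolds using the parametrisations \eqref{eq:DualCoord} and \eqref{eq:ANCoord}: an element $u = (\nu,\kappa)_*\in S^*$ and an element $g = (a,n) \in S$ become explicit points in $\mathbb{R}\times V\times\mathbb{R}$. Multiplying $u\cdot g$ inside $G$ by the group law \eqref{eq:DoubleW} and then comparing the result with the product of a generic pair $(a',n')\cdot(\nu',\kappa')_* \in S\cdot S^*$ gives a system of four equations determining the two components of the decomposition $u g = {}^{u}g \cdot u^g$. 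Solving for $(\nu',\kappa')$ in terms of $(\nu,\kappa,a,n)$ yields a closed formula for the dressing action $(u,g)\mapsto u^g$, and hence, after introducing convenient global coordinates $(x,y)$ on $S^*$ adapted to \eqref{eq:DualCoord}, one reads off the fundamental vector fields $\ell_H$ and $\ell_E$ by differentiating at $a=n=0$ in the directions $H$ and $E$ respectively.

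Once $\ell_H$ and $\ell_E$ are in hand as concrete vector fields on $S^*$, the two parts are solved by inverting the relevant Poisson tensor. For part \emph{i.)} the Poisson structure $\pi_*$ is the standard dual one, so by the remark following Definition~\ref{def:DressingGen} the dressing generators are just the left invariant one-forms on $S^*$ whose value at the identity are $H$ and $E$ viewed as elements of $(\lie s^*)^*\cong \lie s$. Using \eqref{eq:DualCoord} and the left invariant Maurer--Cartan forms of the $ax+b$ group $S^*$, I would read off the stated expressions $\alpha_H = -(y+1)^{-1}\D x$ and $\alpha_E = (2(y+1))^{-1}\D y$, and then verify independently that $\pi_*^\sharp(\alpha_H) = \ell_H$ and $\pi_*^\sharp(\alpha_E) = \ell_E$. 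For part \emph{ii.)} the Poisson tensor is
\begin{equation*}
  \pi_\ell = r^{ij}\,\ell_{X_i}\wedge \ell_{X_j} = \ell_H \wedge \ell_E,
\end{equation*}
a rank-two bivector on the open subset of $S^*$ where $\ell_H$ and $\ell_E$ are linearly independent. On that open set $\pi_\ell^\sharp$ restricts to an isomorphism from its cotangent image onto $\mathrm{span}(\ell_H,\ell_E)$, and the defining relations $\pi_\ell^\sharp(\alpha_H) = \ell_H$, $\pi_\ell^\sharp(\alpha_E) = \ell_E$ become a pointwise $2\times 2$ linear system. Solving it in the $(x,y)$-coordinates gives exactly $\alpha_H = y^{-1}\D x$ and $\alpha_E = (2y)^{-1}\D y$; the pole at $y=0$ reflects the expected fact that these dressing generators are only locally defined, as warned in the remark after Definition~\ref{def:DressingGen}.

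Finally, I would double check the structural conditions \eqref{eq:AlgMorph}--\eqref{eq:MC} using the cobracket
\begin{equation*}
  \delta(H) = -2\,H\wedge E,\qquad \delta(E) = 0
\end{equation*}
already computed in the appendix: a direct exterior-derivative and Schouten bracket computation in coordinates shows $\D\alpha_H = -2\,\alpha_H\wedge\alpha_E$ and $\D\alpha_E = 0$, matching $\alpha\wedge\alpha\circ\delta$, and similarly $[\alpha_X,\alpha_Y]_{\pi} = \alpha_{[X,Y]}$ for both Poisson structures. The main computational obstacle is the explicit decomposition step $ug = {}^ug\cdot u^g$ inside the Heisenberg-by-$\mathbb{R}$ model, which requires a careful manipulation of the cocycle $\tfrac12\Omega(v, e^{2aB}v')$ appearing in \eqref{eq:DoubleW}; once this decomposition is obtained, the remaining work is linear algebra in two variables.
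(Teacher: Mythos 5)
Your proposal follows essentially the same route as the paper's own proof: realise $S$ and $S^*$ inside the explicit model of the double from Lemma~\ref{lem:DoubleGroup}, solve the decomposition equation coming from the group law \eqref{eq:DoubleW} to get the dressing action in the coordinates $(x,y)=(\kappa,\eta(\nu))$, differentiate to obtain $\ell_H$ and $\ell_E$, write down $\pi_*$ and $\pi_\ell$, and invert \eqref{eq:DressShift} pointwise. The only difference is that you additionally propose to verify the structural conditions \eqref{eq:AlgMorph}--\eqref{eq:MC}, which the paper's proof leaves implicit; that is a harmless (indeed slightly more complete) addition.
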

\begin{proof}
  The first step consists in computing the fundamental vector field of
  the dressing action by using the realization obtained above of $\lie s$ and $\lie s^*$
  in terms of the double. More explicitely, using the coordinates
  \eqref{eq:DualCoord}-\eqref{eq:ANCoord} and the group law
  \eqref{eq:DoubleW} we have that
  \begin{equation}
    (a,n)(\nu, \kappa)_* = 
    \left(
    a+\nu
     , 
    e^{2a}(n-\kappa e^{2\nu})E
     , 
    \frac{e^{-2a}}{2}(e^{-2\nu}-1)F
     , 
    -\frac{1}{4}(\kappa+e^{2\nu}\kappa - ne^{-2\nu}+n)
    \right).
    \end{equation}
    Similarly, we have
    \begin{equation}
      (\underline{\nu}, \underline{\kappa})_*(\underline{a},\underline{n}) = 
      \left(
      \underline{\nu}+\underline{a}
      , 
      e^{2\underline{\nu}}(e^{2\underline{a}}\underline{n} -\underline{\kappa} )E\
      , 
      \frac{1}{2}(e^{-2\underline{\nu}}-1)F
      , 
      -
      \frac{1}{4}\left(\underline{\kappa}(1+e^{2\underline{\nu}})
      +
      (1-e^{2\underline{\nu}})e^{2\underline{a}}\underline{n}\right)
      \right) .
    \end{equation}
    The dressing action $S^\star\times S\to S^\star$ therefore amounts
    to solve the equation $(a,n)(\nu, \kappa)_* = (\underline{\nu},
    \underline{\kappa})_*(\underline{a},\underline{n})$ for
    $(\underline{\nu},\underline{\kappa})_*$ as a function of
    $a,n,\kappa,\nu$.  From an easy computation it follows that the
    solution is given by
    \begin{equation}
      \left\{
      \begin{array}{ccc}
        \underline{\kappa}&=&\kappa - n \eta(\nu)\\
        \eta(\underline{\nu})&=&e^{-2a} \eta(\nu)
      \end{array}
      \right.
    \end{equation}
    where $\eta$ is the diffeomorphism defined by $ \eta: \mathbb R
    \to ]-1 , \infty[ \colon x\mapsto\eta(x) := e^{-2x}-1$.
    Considering the coordinate system $
    S^\star\hookrightarrow\mathbb R^2:\xi := (\nu,\kappa)_*\mapsto
    (x,y) := (\kappa , \eta(\nu)) $, the local right dressing
    action then reads:
    \begin{equation}
      (x,y)\cdot(a,n) := (x - n y , e^{-2a} y) .
    \end{equation}
    Indeed, the multiplication map
    \begin{equation}
      \label{eq:decomp}
      S^\star\times S\to G
      : 
      (\xi,x)\mapsto \xi\cdot s
    \end{equation}
    is an open embedding. Hence locally one may set:
    \begin{equation}
      s\cdot \xi 
      =
      \xi^s\cdot s^\xi
      \quad\text{ with}\quad 
      s^\xi\in S
      \quad\text{ and}\quad 
      \xi^s\in S^\star .
    \end{equation}
    One then notes that for all $s_1,s_2\in S$ and $\xi\in S^\star$:
    \begin{equation}
      \xi^{s_1 s_2}(s_1s_2)^\xi
      =  
      s_1s_2\xi = s_1\xi^{s_2}x_2^\xi 
      = 
      \left(\xi^{s_2}\right)^{s_1}s_1^{\xi^{s_2}}s_2^\xi
    \end{equation}
    which implies 
    \begin{equation}
      \xi^{s_1s_2} 
      = 
      \left(\xi^{s_2}\right)^{s_1} .
    \end{equation}
    Hence the map $S^\star\times S\to S^\star:(\xi,s)\mapsto \xi^s$
    which given elements $s\in S$ and $\xi\in S^\star$ expresses the
    $S^\star$-component (local) of the product $s\cdot \xi$ in terms
    of the decomposition \eqref{eq:decomp} is a right action of $S$ on
    $S^\star$.  The latter globalizes under the usual matrix
    left-action of the affine group on the plane as
    \begin{equation}
      S\times\mathbb R^2\to\mathbb R^2 
      : 
      (s=(a,n) , v=(x,y))
      \mapsto  
      s.v 
      := 
      v.s^{-1} 
      := 
      \left(\begin{array}{cc}e^{2a}&0\\ n e^{2a}&1\end{array}\right)
        \left(\begin{array}{c}x\\ y\end{array}\right) .
    \end{equation}
    Now we express the group multiplication in $S^\star$ within the
    above coordinate system:
    \begin{equation}
      (x,y).(x',y') 
      := 
      \Phi\left(\Phi^{-1}(x,y).\Phi^{-1}(x',y')\right) 
      = 
      \left( (y'+1)x + x' , (y'+1)y + y' \right) .
    \end{equation}
    The unit consists in the vector origin $(0,0)$ and the inverse
    (which is only local at the level of the entire ambient space
    $\mathbb R^2$) is given by $(x,y)^{-1} = \frac{1}{y+1}\left( -x ,
    -y \right) $.
    It is useful to rewrite the dressing action using musical notation; 
    in this case we consider the coordinate system 
    \begin{equation}
      S^\star\hookrightarrow\mathbb R^2:\xi 
      := 
      (\nu,\kappa)_*\mapsto (x,y) := (\kappa , \eta(\nu)) ,
    \end{equation}
    where $\eta(x) = 1-e^{-2x}$ and the local right dressing action 
    $\xi.(a,n) := (\underline{\kappa},\underline{\nu})$ then reads:
    \begin{equation}
      \label{eq:globaldressing}
      (x,y)\cdot(a,n)
      = 
      (x + ny, e^{-2a}y).
    \end{equation}
    This implies that the dressing action is infinitesimally generated
    by the following fields:
    \begin{equation}
      {}^\flat\widehat{H}_{(x,y)} 
      := 
      \frac{d}{dt}\at{t=0}(x,y)(t,0) 
      = 
      -2y\partial_y
      \quad
      \text{ and}
      \quad
          {}^\flat\widehat{E}_{(x,y)} 
          := 
          \frac{d}{dt}\at{t=0}(x,y)(0,t) 
          = 
          y\partial_x .
    \end{equation}
    The next step consists in computing explicitely the dressing
    generators. Note that there are 3 Poisson structures involved here
    on the image $U $ of $S^\star\hookrightarrow\mathbb R^2$, the dual
    Poisson Lie group structure
    \begin{equation}
      \pi_*
      =
      2y(y+1)\partial_x\wedge \partial_y,
    \end{equation}
    the Poisson structure $\pi_\ell$ induced by the action
    \begin{equation}
      \pi_\ell
      = 
      2y^2 \partial_{x}\wedge \partial_{y}
    \end{equation}
    and the linear one $\pi_{\lie s^\star}$. 
    It is easy to see that
    \begin{equation}
      \pi_*
      = 
      \pi_\ell + \pi_{\lie s^\star}.
    \end{equation}
    Finally, imposing the condition \eqref{eq:DressShift} we obtain
    that the dressing generators with respect to to $\pi_*$ and $\pi_\ell$ we get
    the expressions \eqref{eq:dressingstar} and
    \eqref{eq:dressinglambda}, resp. 
\end{proof}


\begin{thebibliography}{1}

\bibitem {Alekseev2013}
\textsc{Alekseev, A., Meinrenken, E.:}\newblock \emph{Linearization of Poisson
  Lie group structures}.
  \newblock J. Symplectic Geom.  \textbf{14} (2016), 227--267.
  
\bibitem {Bieliavsky2016}
\textsc{Aschieri, P, Bieliavsky, P., Pagani, C., Schenkel, A.:}\newblock \emph{Noncommutative principal bundles through twist deformation}.
\newblock Comm. Math. Phys.  \textbf{352} (2017), 287--344.
  
\bibitem {bayen.et.al:1978a}
\textsc{Bayen, F., Flato, M., Fr{{\o}}nsdal, C., Lichnerowicz, A., Sternheimer,
  D.: }\newblock \emph{Deformation Theory and Quantization}.
\newblock Ann. Phys.  \textbf{111} (1978), 61--151.

\bibitem {Bhaskara1988}
\textsc{Bhaskara, K., Viswanath, K.:}\newblock \emph{Calculus on Poisson
  manifolds}.
\newblock Bull. London Math. Soc.  \textbf{20} (1988), 68--72.

\bibitem{BBM} 
\textsc{Bieliavsky, P., Bonneau, Ph., Maeda, Y.:}
\newblock\emph{Universal deformations formulae, symplectic Lie groups and symmetric spaces}; 
\newblock Pacific J. Math.  \textbf{230} (2007), 41--57.

\bibitem {Bieliavsky2015}
\textsc{Bieliavsky, P., Gayral, V.:}\newblock \emph{Deformation Quantization for Actions of Kählerian Lie Groups}.
\newblock  Mem. Amer. Math. Soc.  \textbf{236} (2015).



\bibitem {thomas1}
\textsc{Bieliavsky, P., Esposito, C., Waldmann, S., Weber, T.: }\newblock \emph{Obstructions for Twist Star Products}.
\newblock Lett. Math. Phys. \textbf{108} (2018), 1341--1350.

\bibitem {chari.pressley:1994a}
\textsc{Chari, V., Pressley, A.: }\newblock \emph{A Guide to Quantum Groups}.
\newblock Cambridge University Press, Cambridge, 1994.

\bibitem {drinfeld:1983}
\textsc{Drinfeld, V.~G.:}
\newblock
\emph{Hamiltonian structures on Lie groups, Lie bialgebras, and the geometric meaning of the classical Yang-Baxter equations}.
\newblock Soviet Math. Dokl.   \textbf{27} (1983), 285--287.

\bibitem {drinfeld:1988a}
\textsc{Drinfeld, V.~G.:}\newblock \emph{Quantum groups}.
\newblock J. Sov. Math.  \textbf{41} (1988), 898--918.

\bibitem {thomas2}
\textsc{D'Andrea, F., Weber, T.: }\newblock \emph{Twist star products and Morita equivalence}.
\newblock C.R. Acad. Sci. Paris, Ser. I \textbf{355} (2017), 1178--1184.


\bibitem {chiara1}
\textsc{Esposito, C.:}\newblock \emph{Poisson Reduction}.
\newblock Trends in Mathematics, Geom. Methods Phys. (2014), 131--142.

\bibitem {jonas}
\textsc{Esposito, C., Schnitzer, J., Waldmann, S.: }\newblock \emph{A Universal Construction of Universal Deformation Formulas,
  Drinfel'd Twists and their Positivity}.
\newblock Pacific J. Math.  \textbf{291} (2017), 319--358.

\bibitem {fedosov}
\textsc{Fedosov, B.:}\newblock \emph{Non Abelian Reduction in deformation quantization}.
\newblock Lett. Math. Phys. \textbf{43} (1998), 137--154.

\bibitem {gerstenhaber}
\textsc{Gerstenhaber, M.: }\newblock \emph{New Universal Deformation Formulas for deformation quantization}.
\newblock Preprint  arXiv:1802.04919.

\bibitem {giaquinto.zhang:1998a}
\textsc{Giaquinto, A., Zhang, J.~J.: }\newblock \emph{Bialgebra actions,
  twists, and universal deformation formulas}.
\newblock J. Pure Appl. Algebra  \textbf{128}.2 (1998), 133--152.

\bibitem {kontsevich}
\textsc{Kontsevich, M.: }\newblock \emph{Deformation quantization of Poisson manifolds}.
\newblock Lett. Math. Phys. \textbf{66}.3 (2003), 157--216.

\bibitem {Kosmann-Schwarzbach2004}
\textsc{Kosmann-Schwarzbach, Y.:}\newblock \emph{Lie bialgebras, Poisson Lie
  groups and dressing transformations}.
\newblock In: \emph{Integrability of Nonlinear Systems}, vol. 638 in
  \emph{Lecture Notes in Physics},   107---173. Springer-Verlag, second.
  edition, 2004.

\bibitem {Lu1990}
\textsc{Lu, J.-H.:}\newblock \emph{{Multiplicative and {A}ffine {P}oisson
  structure on {L}ie groups}}.
\newblock PhD thesis, University of California (Berkeley), 1990.

\bibitem {Lu1990b}
\textsc{Lu, J.-H., Weinstein, A.:}\newblock \emph{{Poisson Lie groups, dressing transformations, and Bruhat decompositions}}.
\newblock J. Differential Geom. \textbf{31}.2 (1990), 501--526.

\bibitem {Lu1991}
\textsc{Lu, J.-H.:}\newblock \emph{Momentum mappings and reduction of Poisson actions}.
\newblock In: \emph{Symplectic geometry,
groupoids, and integrable systems},  Math. Sci. Res. Inst. Publ., Berkeley, California, 1991.

\bibitem {lu3}
\textsc{Lu, J.-H.:}\newblock \emph{Moment Maps at the Quantum level}.
\newblock Comm. Math. Phys. \textbf{157}.2 (1993), 389--404.

\bibitem {Majid}
\textsc{Majid, S.:}\newblock \emph{Foundations of Quantum Group Theory}.
\newblock Cambridge University Press (1995).



\end{thebibliography}
\end{document}